\documentclass[reqno,11pt]{amsart}
\usepackage{amsmath,amssymb,latexsym,cancel,rotating,hyperref}

\usepackage{eucal}

\usepackage{amscd}
\usepackage[dvips]{color}
\usepackage{multicol}
\usepackage[all]{xy}           
\usepackage{graphicx}
\usepackage{color}
\usepackage{colordvi}
\usepackage{xspace}
\usepackage{tikz}

\usepackage{enumitem}

\textwidth15.1cm \textheight21cm \headheight12pt
\oddsidemargin.4cm \evensidemargin.4cm \topmargin0.5cm

\addtolength{\marginparwidth}{-13mm}

\numberwithin{equation}{section}

\newtheorem{theorem}{Theorem}[section]
\newtheorem{proposition}[theorem]{Proposition}

\newtheorem{corollary}[theorem]{Corollary}
\newtheorem{lemma}[theorem]{Lemma}

\newtheorem{remark}[theorem]{Remark}

\newtheorem{definition}[theorem]{Definition}

\input xy
\xyoption{poly}
\xyoption{2cell}
\xyoption{all}

\def\ZZ{\mathbb Z}

\def\e{{\bf{e}}}

\def\b{{\bf{b}}}

\def\ZZ{\mathbb{Z}}

\renewcommand{\eqref}[1]{{\rm (\ref{#1})}}

\begin{document}

\title[On the acyclic quantum  cluster algebras with principal coefficients]
{On the acyclic quantum  cluster algebras with principal coefficients}

\author{Junyuan Huang, Xueqing Chen, Ming Ding and Fan Xu}
\address{School of Mathematics and Information Science\\
Guangzhou University, Guangzhou 510006, P.R.China}
\email{jy4545@e.gzhu.edu.cn (J.Huang)}
\address{Department of Mathematics,
 University of Wisconsin-Whitewater\\
800 W. Main Street, Whitewater, WI.53190. USA}
\email{chenx@uww.edu (X.Chen)}
\address{School of Mathematics and Information Science\\
Guangzhou University, Guangzhou 510006, P.R.China}
\email{dingming@gzhu.edu.cn (M.Ding)}
\address{Department of Mathematical Sciences\\
Tsinghua University\\
Beijing 100084, P.~R.~China} \email{fanxu@mail.tsinghua.edu.cn (F.Xu)}





\keywords{quantum  cluster algebra, quantum projective cluster variable, lower bound, dual PBW basis}

\maketitle

\begin{abstract}
In this paper, we focus on a new  lower bound quantum cluster algebra which is generated by the initial quantum cluster variables and the quantum projective cluster variables of an acyclic quantum cluster algebra with principal coefficients. We show that the new lower bound quantum cluster algebra  coincides with the corresponding acyclic quantum cluster algebra.   Moreover, we establish a class of formulas between these generators, and obtain the dual PBW basis of this algebra.
\end{abstract}


\section{Background}

Fomin and Zelevinsky \cite{ca1,ca2} invented cluster algebras in order
to set up an algebraic framework for studying the total positivity \cite{Lus3} and the dual canonical bases in coordinate rings and their $q$-deformations \cite{Lus1, Lus2}.
A cluster algebra is  a subalgebra of the field of rational functions in $n$ variables generated by a family of generators called cluster variables which are obtained recursively through mutation.
Quantum cluster algebras as the quantization of cluster algebras,
were later introduced by Berenstein and Zelevinsky \cite{AA}. One of the most important problems in cluster theory is to  construct bases of (quantum) cluster algebras with good properties (see for example \cite{Q3,li-pan}).

The lower bound cluster algebra, introduced by Berenstein, Fomin and
Zelevinsky \cite{bfz}, is a subalgebra of the associated cluster algebra generated by these cluster variables in the clusters which are one-step mutations from the initial cluster. They showed that a cluster algebra   is equal to
the lower bound  if and only if it  possesses an~acyclic seed, and then the standard monomial basis of this kind of~cluster algebra can  be naturally obtained. In the quantum setting, these results also hold  \cite{AA}. Note that the standard monomial basis is useful to construct well-behaved basis for a quantum cluster algebra which contains all cluster monomials, i.e., monomials with variables from the same cluster \cite{BZ}.

 A new lower bound cluster algebra was introduced by Baur and Nasr-Isfahani \cite{BN},  which is generated by  the initial cluster variables and the so-called projective cluster variables. Note that this new lower bound is constructed by using cluster variables from two seeds,  which is the main difference from Berenstein, Fomin and
Zelevinsky's lower bound. They proved that  an acyclic cluster algebra coincides with its  lower bound, and further constructed a basis for this cluster algebra.   Qin pointed out that this basis should agree with an associated dual
PBW basis in the language of \cite{KQ}, which is important for defining the triangular basis \cite{Q1,Q2}.

In this paper, we work with the acyclic quantum cluster algebra with principal coefficients. We  first show that  the  lower bound algebra generated by the initial quantum cluster variables and the quantum projective cluster variables  coincides with the corresponding acyclic quantum cluster algebra. Then an interesting family of identities between the initial quantum cluster variables and the quantum projective cluster variables are obtained.  As an application, we  construct  the dual PBW basis of this  algebra. It is worth remarking that our approach in this paper is almost exclusively  elementary algebraic computation.

\section{Preliminaries}

First of all, we fix our terminology and briefly recall some definitions and related results of the quantum cluster algebras.

For two positive integers $s$ and $t$ with $s<t$, for convention  we write $[s,t]$ for the set $\{s,s+1,\ldots,t-1,t\}$.
A square integer  matrix $B$ is called skew-symmetrizable if there exists
some integer diagonal matrix $D$ with positive diagonal entries such that $DB$ is skew-symmetric and we then call $D$ the skew-symmetrizer of $B$.
Let $m$ and $n$ be two positive integers with $m\geq n$. Let $\widetilde{B}=(b_{ij})$ be an~$m\times n$ integer matrix with its upper $n\times n$ submatrix being skew-symmetrizable denoted by $B$ called the principal part  of $\widetilde{B}$.
 We can choose  an $m\times m$  skew-symmetric integer matrix $\Lambda $ such that $\widetilde{B}^{T}\Lambda=\begin{bmatrix} D & {\bf{0}} \end{bmatrix}$ for some integer diagonal
 matrix  $D$ with positive diagonal entries. The pair $(\widetilde{B},\Lambda)$ is called a compatible pair, which is specified by the following data:
\begin{enumerate}
\item an $m\times n$ integer matrix $\widetilde{B}$ with the skew-symmetrizable principal part $B$ and its skew-symmetrizer $D=\operatorname{diag}(d_1, d_2, \dots, d_n)$ and
$\widetilde{B}=\begin{bmatrix}  {\bf{b}}_1 & {\bf{b}}_2 &\dots &{\bf{b}}_n \end{bmatrix}$ with
${\bf{b}}_j \in \mathbb{Z}^m $ for $ j \in[1, n]$;
\item a skew-symmetric bilinear form $\Lambda:\mathbb{Z}^m\times\mathbb{Z}^m\to\mathbb{Z}$ satisfying the compatibility
condition with  $\widetilde{B}$, i.e.,
\begin{equation}\label{(dj)}
	\Lambda({\bf{b}}_j, {\bf{e}}_i)=\delta_{ij}d_j \quad (i \in [1,m], j\in [1,n])
		\end{equation}
where $\e_i$ is the $i$-th unit vector in $\mathbb{Z}^m$ for any $ i\in [1,m]$.
\end{enumerate}

Note that we can identify the bilinear form $\Lambda$ with the skew-symmetric $m\times m$ matrix still denoted by $\Lambda=(\lambda_{ij})$ with
$\lambda_{ij}:=\Lambda(\e_i,\e_j).$

Let $q$ be a formal variable and denote $\mathbb{Z}[q^{ \pm {\frac{1}{2}}}]$ the ring of integer Laurent polynomials
in the variable $q^{\frac{1}{2}}$. The based quantum torus $\mathcal{T}=\mathcal{T}(\Lambda)$ is the $\mathbb{Z}[q^{ \pm {\frac{1}{2}}}]$-algebra with a
distinguished  $\mathbb{Z}[q^{ \pm {\frac{1}{2}}}]$-basis $\{ {X^{\e}}: {\e}\in\mathbb{Z}^m \}$ and the multiplication given by
\begin{equation}\label{(ef)}
	X^{\e} X^{\bf{f}}=q^{\frac{\Lambda({\bf{e}}, {\bf{f}})}{2}} X^{{\bf{e}} +{\bf{f}}} \quad (\e, {\bf{f}} \in \mathbb{Z}^m).
	 \end{equation}

Denote by $x_i=X^{\e_i}$ for any ${i\in [1,m]}$, then
the elements of $x_i$ and their inverses generate $\mathcal{T}$ as a  $\mathbb{Z}[q^{ \pm {\frac{1}{2}}}]$-algebra, subject to the quasi-commutative relations
\begin{equation}\label{(ij)}
	{x_i}{x_j} = {q^{{\lambda _{ij}}}}{x_j}{x_i}
\end{equation}
for $i,j \in [1,m]$.
For any ${\bf{a}}=(a_1, a_2, \dots, a_m)\in \mathbb{Z}^m$,  we get $$X^{\bf{a}}=q^{\frac{1}{2} \sum_{l<k} a_k a_l \lambda_{kl}} x_1^{a_1}x_2^{a_2}\cdots x_m^{a_m}.$$

\begin{definition}\label{def of generalized seed}
With the above notations, a quantum seed is defined to be the triple $\big(\widetilde{\mathbf{x}},\Lambda,\widetilde{B}\big)$, where
the set $\widetilde{\mathbf{x}}=\{x_{1},x_2,\ldots, x_{m}\}$ is the extended cluster, $\mathbf{x}=\{x_{1},x_2,\ldots,x_{n}\}$ is the cluster,
elements $x_i$ for $i\in [1,n]$ are called quantum cluster variables and elements $x_{i}$ for $i\in [n+1, m]$ are called frozen variables.
\end{definition}

Define the function
\begin{gather*}
[x]_+:=
 \begin{cases}
 x,  &\text{if}\quad x\geq 0;
 \\
0,  & \text{if}\quad x< 0.
 \end{cases}
\end{gather*}

\begin{definition}
For $k \in [1,n]$, the mutation of~a quantum seed $\big(\widetilde{\mathbf{x}},\Lambda,\widetilde{B}\big)$ in the direction $k$ is the quantum seed \smash{$\mu_k\big(\widetilde{\mathbf{x}},\Lambda,\widetilde{B}\big):=\big(\widetilde{\mathbf{x}}',
\Lambda',\widetilde{B}'\big)$}, where
\begin{enumerate}\itemsep=0pt
\item the set $\widetilde{\mathbf{x}}':=(\widetilde{\mathbf{x}}- \{x_k\})\cup\{x'_{k}\}$ with
\begin{equation}\label{(er)}
x'_k=X^{-\e_k+[\b_k]_+}+X^{-\e_k+[-\b_k]_+};
\end{equation}
\item
the matrix $\widetilde{B}':=\mu_k\big(\widetilde{B}\big)$ is defined by
\begin{gather}\label{(B)}
b'_{ij}=
 \begin{cases}
 -b_{ij}, &\text{ if }  i=k \text{ or } j=k;
 \\
b_{ij}+\displaystyle\frac{|b_{ik}|b_{kj}+b_{ik}|b_{kj}|}{2}, &\text{otherwise};
 \end{cases}
\end{gather}
\item
the skew-symmetric matrix $\Lambda':=\mu_k\big(\Lambda\big)$ is defined by
\begin{gather}\label{(A)}
	\lambda'_{ij}=
	\begin{cases}
		\lambda_{ij}, &\text{ if } i,j\ne k;
		\\
		- {\lambda _{ij}} + \sum\limits_{t = 1}^m {{{[{b_{ti}}]}_ + }{\lambda _{tj}}} , &\text{ if } i= k, j\ne k.
	\end{cases}
\end{gather}
\end{enumerate}
\end{definition}

Note that $\mu_k$ is an involution.  Two quantum seeds are called mutation-equivalent if one can be obtained from
another by a sequence of mutations. Denote the skew-field of fractions of $\mathcal{T}$  by $\mathcal{F}$ and
$$\mathop{\mathbb{ZP}}:=\mathbb{Z}[q^{ \pm {\frac{1}{2}}}][x^{\pm}_{n+1},\ldots,x^{\pm}_m].$$

\sloppy\begin{definition}\label{def of gca}
Given an initial quantum seed $\big(\widetilde{\mathbf{x}}, \Lambda, \widetilde{B}\big)$, the quantum cluster algebra $\smash{\mathcal{A}\big(\widetilde{\mathbf{x}}, \Lambda, \widetilde{B}\big)}$ is the $\mathop{\mathbb{ZP}}$-subalgebra of~$\mathcal{F}$ generated by all quantum cluster variables from all quantum seeds mutation-equivalent to $\smash{\big(\widetilde{\mathbf{x}},\Lambda,\widetilde{B}\big)}$.
\end{definition}
Note that  one can recover the classical cluster algebra by setting $q=1$.

The directed graph associated to a quantum seed~$\big(\widetilde{\mathbf{x}}, \Lambda,\widetilde{B}\big)$ is denoted by   $\Gamma\big(\widetilde{\mathbf{x}}, \Lambda,\widetilde{B}\big)$ with vertices $[1,n]$ and the directed edges from $i$ to $j$ if $b_{ij}>0$.

\begin{definition}
 A   quantum seed $\big(\widetilde{\mathbf{x}},\Lambda,\widetilde{B}\big)$ is called acyclic if $\Gamma\big(\widetilde{\mathbf{x}},\Lambda,\widetilde{B}\big)$ does not contain any oriented cycle.
A quantum cluster algebra is called acyclic if it has an acyclic  quantum seed.
\end{definition}

\begin{definition}
	A standard monomial in $x_1,x_1^{\prime},\ldots,x_n,x_n^{\prime}$ is an element of the
	form $x_1^{a_1}\cdots x_n^{a_n}(x_1^{\prime})^{a_1^{\prime}}\cdots(x_n^{\prime})^{a_n^{\prime}}$, where all exponents are non-negative integers, and	$a_ka_k^{\prime}=\overset{.}{\operatorname*{0}}$ for $k \in [1, n]$.
\end{definition}

Denote by $\mathcal{L}(\widetilde{\mathbf{x}},\Lambda,\widetilde{B}) := \ZZ\mathbb{P}[{x_1},x_1^{\prime}, \ldots {x_n},x_n^{\prime}]$.
The following theorems are quantum versions of the corresponding results in \cite{bfz}.

\begin{theorem}\label{standard-1}\cite[Theorem 7.3]{AA}
	The standard monomials in $x_{1},x_{1}^{\prime},\ldots,x_{n},x_{n}^{\prime}$ are linearly independent
	over $\mathbb{ZP}$  (i.e., they form a $\mathbb{ZP}$-basis of $\mathcal{L}(\widetilde{\mathbf{x}},\Lambda,\widetilde{B})$) if and only if the quantum seed $\big(\widetilde{\mathbf{x}},\Lambda,\widetilde{B}\big)$ is acyclic.
\end{theorem}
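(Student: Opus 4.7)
The plan is to split the argument into three parts: spanning is unconditional, necessity of acyclicity comes from cycle-based straightening, and sufficiency is a leading-Laurent-term argument. From \eqref{(er)} and \eqref{(ef)} one derives the quantum exchange relation
\[
x_k x_k' = q^{\alpha_k}X^{[\mathbf{b}_k]_+} + q^{\beta_k}X^{[-\mathbf{b}_k]_+},
\]
with half-integers $\alpha_k,\beta_k$ fixed by $\Lambda$; since $b_{kk}=0$ the right-hand side lies in $\mathbb{ZP}[x_j : j\in[1,n],\ j\ne k]$. Hence any occurrence of $x_k^a(x_k')^{a'}$ with $\min(a,a')\ge 1$ reduces to terms of strictly smaller total $(x_k,x_k')$-degree, and iterating over $k$ shows that standard monomials $\mathbb{ZP}$-span $\mathcal{L}(\widetilde{\mathbf{x}},\Lambda,\widetilde{B})$. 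For the ``only if'' direction I would mimic the classical BFZ argument: an oriented cycle $i_1\to\cdots\to i_r\to i_1$ in $\Gamma(\widetilde{\mathbf{x}},\Lambda,\widetilde{B})$ yields two distinct standard-monomial reductions of the product $\prod_s x_{i_s}x_{i_s}'$, whose difference is nonzero because it matches the classical dependence at $q=1$ and every quantum prefactor coming from \eqref{(ef)} and \eqref{(A)} is a unit in $\mathbb{ZP}$.

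The main work is the sufficiency direction. Assume $B$ is acyclic, fix a topological order $1<\cdots<n$ with $b_{ij}>0\Rightarrow i<j$, and embed $\mathcal{L}(\widetilde{\mathbf{x}},\Lambda,\widetilde{B})$ into $\mathcal{T}$ by substituting \eqref{(er)} for each $x_k'$. A standard monomial
\[
M = x_1^{a_1}\cdots x_n^{a_n}(x_1')^{a_1'}\cdots(x_n')^{a_n'}
\]
then expands as a sum of $2^{\#\{k:a_k'>0\}}$ Laurent monomials in $\mathcal{T}$, each with a unit coefficient in $\mathbb{ZP}$. Single out the distinguished summand $\ell(M)$ by choosing $X^{-\mathbf{e}_k+[-\mathbf{b}_k]_+}$ from $x_k'$ for every $k$ with $a_k'>0$; its $i$-th coordinate is
\[
a_i-a_i'+\sum_{k<i}a_k'\,[-b_{ik}]_+ \qquad (i\in[1,n]),
\]
where the restriction $k<i$ uses skew-symmetrizability and the topological order to force $[-b_{ik}]_+=0$ whenever $k\ge i$. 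Combined with the standard-monomial constraint $a_k a_k'=0$, these $n$ coordinates determine $(a_1,a_1',\dots,a_n,a_n')$ by induction on $i$, so the map $M\mapsto\ell(M)$ is injective on standard monomials.

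The remaining hurdle is to certify that $\ell(M)$ really is the top exponent in $M$'s expansion and cannot be cancelled in a relation. I would choose a $\mathbb{Z}$-linear functional $L\colon\mathbb{Z}^m\to\mathbb{Z}$ with $L(\mathbf{b}_k)<0$ for every $k\in[1,n]$; its existence follows from the linear independence of $\mathbf{b}_1,\ldots,\mathbf{b}_n$ in $\mathbb{R}^m$, which is an immediate consequence of the compatibility condition \eqref{(dj)}. With such $L$ fixed, the distinguished summand is the unique $L$-maximum in $M$'s expansion. Then in a putative relation $\sum_M c_M M = 0$ with $c_M\in\mathbb{ZP}$, picking $M^*$ that maximises $L(\ell(M^*))$ among those $M$ with $c_M\ne 0$ makes the coefficient of $X^{\ell(M^*)}$ in the sum equal to $c_{M^*}$ times a unit in $\mathbb{ZP}$, forcing $c_{M^*}=0$, a contradiction. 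Acyclicity enters crucially through injectivity of $M\mapsto\ell(M)$: in a $3$-cycle example, two distinct standard monomials produce the same $\ell(M)$ and the argument genuinely fails. The only quantum input is that every correction $q^{\star/2}$ is a unit in $\mathbb{ZP}$, so the classical combinatorics passes through to the quantum setting without essential modification.
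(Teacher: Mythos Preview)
The paper does not prove this theorem at all: it is quoted as \cite[Theorem~7.3]{AA} and used as a black box, so there is no in-paper argument to compare your proposal against.

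Taken on its own merits, your reconstruction is essentially the Berenstein--Fomin--Zelevinsky argument from \cite{bfz} transported to the quantum setting, and the sufficiency direction is correct. The injectivity of $M\mapsto\ell(M)$ via the triangular system $c_i=a_i-a_i'+\sum_{k<i}a_k'[-b_{ik}]_+$ together with $a_ia_i'=0$ is the right idea, and the linear functional $L$ with $L(\mathbf{b}_k)<0$ is the standard device for isolating the leading term. One small refinement you should make explicit: when you pick $M^*$ maximising $L(\ell(M^*))$ there may be several $M$ achieving the maximum; you need to observe that their $\ell(M)$ are pairwise distinct (by your injectivity) and that every non-distinguished summand of any such $M$ has strictly smaller $L$-value, so no cross-term can contribute to the coefficient of $X^{\ell(M^*)}$. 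The spanning step is fine once you note that the $q$-commutation relations \eqref{(ij)} only rescale monomials by units, so reordering after an exchange reduction does not alter the $(x_k,x_k')$-degree count. The necessity direction is only sketched; invoking the classical dependence at $q=1$ is a legitimate shortcut, but a self-contained quantum argument would require tracking the $q$-prefactors through the two reductions of the cycle product more carefully than you indicate.
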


\begin{theorem}\label{LA}\cite[Theorem 7.6]{AA}
	The condition that a quantum seed $\big(\widetilde{\mathbf{x}},\Lambda,\widetilde{B}\big)$ is acyclic, is necessary and
	sufficient for the equality $\mathcal{L}(\widetilde{\mathbf{x}},\Lambda,\widetilde{B}) = \smash{\mathcal{A}\big(\widetilde{\mathbf{x}}, \Lambda, \widetilde{B}\big)}$.
\end{theorem}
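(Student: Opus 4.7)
The plan is to establish the two implications separately. Since the initial cluster variables $x_1,\ldots,x_n$ lie in $\widetilde{\mathbf{x}}$ and, by \eqref{(er)}, each $x_k'$ is the new quantum cluster variable obtained from $(\widetilde{\mathbf{x}},\Lambda,\widetilde{B})$ by mutation at $k$, every generator of $\mathcal{L}(\widetilde{\mathbf{x}},\Lambda,\widetilde{B})$ is a cluster variable of some mutation-equivalent seed, so $\mathcal{L}\subseteq\mathcal{A}$ holds unconditionally. The real content of the theorem is thus the reverse inclusion under acyclicity and the necessity of acyclicity for equality.

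For the direction ``acyclic implies $\mathcal{L}=\mathcal{A}$'' I would transplant the upper-bound strategy of Berenstein--Fomin--Zelevinsky to the quantum setting. Introduce the quantum upper bound $\mathcal{U}=\mathcal{U}(\widetilde{\mathbf{x}},\Lambda,\widetilde{B})$ as the intersection, taken in $\mathcal{F}$, of the quantum Laurent rings coming from the initial quantum seed and from each of its $n$ one-step mutations. The quantum Laurent phenomenon gives $\mathcal{A}\subseteq\mathcal{U}$, so it suffices to establish $\mathcal{U}\subseteq\mathcal{L}$ in the acyclic case. Fixing an acyclic linear order on $[1,n]$, take $Y\in\mathcal{U}$ and expand it in the $\mathbb{Z}[q^{\pm 1/2}]$-basis $\{X^{\mathbf{a}}\}$ of $\mathcal{T}$. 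Process the monomials in the chosen order: for each negative power of $x_k$ that persists, use the exchange relation \eqref{(er)} together with the $\Lambda$-mutation rule \eqref{(A)} to rewrite the offending factor in terms of $x_k'$ and the remaining initial variables. Iterating reduces $Y$ to a $\mathbb{ZP}$-linear combination of standard monomials in $x_1,x_1',\ldots,x_n,x_n'$, and Theorem \ref{standard-1} guarantees that such an expression is well-defined, so $Y\in\mathcal{L}$.

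For ``$\mathcal{L}=\mathcal{A}$ implies acyclic'' I would argue contrapositively. Note first that by repeated use of \eqref{(er)} the algebra $\mathcal{L}$ is spanned, as a $\mathbb{ZP}$-module, by the standard monomials. If $B$ is not acyclic, Theorem \ref{standard-1} supplies a nontrivial $\mathbb{ZP}$-linear relation among those spanning elements, so in the exponent grading on $\mathcal{T}$ the graded rank of $\mathcal{L}$ is strictly smaller than the free $\mathbb{ZP}$-module on the standard monomials. On the other hand $\mathcal{A}$ contains, in each sufficiently large multi-degree, the cluster monomials produced by successive mutations along a directed cycle in $\Gamma(\widetilde{\mathbf{x}},\Lambda,\widetilde{B})$, and these already exceed what $\mathcal{L}$ can accommodate, forcing $\mathcal{L}\subsetneq\mathcal{A}$.

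I expect the main obstacle to be the inclusion $\mathcal{U}\subseteq\mathcal{L}$ in the acyclic case. The exchange relation \eqref{(er)}, the compatibility condition \eqref{(dj)} and the $\Lambda$-mutation rule \eqref{(A)} interact nontrivially through the quasi-commutation \eqref{(ij)}: every product mixing $x_k$ with $x_k'$ acquires a $q^{\pm 1/2}$-twist dictated by \eqref{(ef)}, so each step of the straightening procedure must be tracked up to scalar, and the compatibility condition is precisely what forces these scalars to match across successive mutations. Executing this straightening consistently with the acyclic ordering, and verifying that the iteration terminates at a genuine standard-monomial expression, is the technical heart of the proof; it is also where acyclicity enters in an essential way, since without it the reduction need not terminate.
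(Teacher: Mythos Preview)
The paper does not prove this theorem at all: it is quoted verbatim from \cite[Theorem~7.6]{AA} and used as a black box. So there is no ``paper's own proof'' to compare against; what you have written is an attempted reconstruction of the Berenstein--Zelevinsky argument.

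Your plan for the sufficiency direction (acyclic $\Rightarrow$ $\mathcal{L}=\mathcal{A}$) is the right one and matches the strategy in \cite{AA}: introduce the quantum upper bound $\mathcal{U}$, invoke the quantum Laurent phenomenon for $\mathcal{A}\subseteq\mathcal{U}$, and then show $\mathcal{U}\subseteq\mathcal{L}$ by straightening. Your description of the straightening step, however, is only a sketch; the actual proof in \cite{AA} (following \cite{bfz}) requires first establishing that $\mathcal{U}$ is invariant under mutation at any $k\in[1,n]$, and the reduction of an element of $\mathcal{U}$ to standard-monomial form proceeds by an induction on a carefully chosen support, not simply by ``processing monomials in order''. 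You correctly identify this as the technical heart, but what you wrote would not yet pass as a proof.

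Your necessity argument contains a genuine gap. From Theorem~\ref{standard-1} you correctly extract that, in the non-acyclic case, the standard monomials span $\mathcal{L}$ with a nontrivial relation. But the step ``$\mathcal{A}$ contains cluster monomials\ldots\ that exceed what $\mathcal{L}$ can accommodate'' is not an argument: there is no well-defined ``exponent grading on $\mathcal{T}$'' with finite-dimensional graded pieces over $\mathbb{ZP}$ in which one could compare ranks, and you have not exhibited any element of $\mathcal{A}\setminus\mathcal{L}$. Linear dependence among the standard monomials tells you $\mathcal{L}$ is not free of the expected rank, but it does not by itself produce a cluster variable outside $\mathcal{L}$. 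The actual necessity proof in \cite{bfz,AA} does not go via a counting argument of this kind; if you want to supply a proof, you will need to either locate an explicit cluster variable outside $\mathcal{L}$ in the cyclic case, or argue indirectly through the equality $\mathcal{L}=\mathcal{U}$ and the characterisation of when that holds.
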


\section{The quantum projective cluster variables}
Up to simultaneously reordering of columns and rows, we can assume  that the entries in the skew-symmetrizable matrix $B$ satisfy  $b_{ij} \geq 0$ for any $i>j$ which then defines a linear order  $\triangleleft$ on $[1,n]$.
In the following, let $\Sigma=( {\widetilde{\mathbf{x}},\Lambda,{\widetilde B}})$ be an acyclic  quantum seed of an acyclic  quantum cluster algebra with principal coefficients.

\begin{definition}\label{3.1}
For any $i \in [1,n],$  define  a new  acyclic quantum seed
$${\Sigma ^{(i)}} = ({\widetilde{\mathbf{x}}^{(i)}},{\Lambda ^{(i)}},{\widetilde B^{(i)}}): = {\mu _{{i}}} \cdots {\mu _{{2}}}{\mu _{{1}}}(\widetilde{\mathbf{x}},\Lambda ,\widetilde B),$$
where ${\widetilde{\mathbf{x}}^{(i)}} =\{ {x_1^{(i)},x_2^{(i)}, \ldots ,x_n^{(i)},x_{n + 1}^{(i)} \ldots ,x_{2n}^{(i)}}\}$.
Thus we have ${\Sigma ^{(n)}} = ({\widetilde{\mathbf{x}}^{(n)}},{\Lambda ^{(n)}},{\widetilde B^{(n)}})$.
The  cluster ${\mathbf{x}}^{(n)} =\{ {x_1^{(n)},x_2^{(n)}, \ldots ,x_n^{(n)}}\}$ is called  the quantum projective cluster, and each  cluster variable in $\mathbf{x}^{(n)}$ is called a quantum projective cluster variable.
\end{definition}

Note that the new  quantum seed ${\Sigma^{(i)}}$ is obtained by applying a sequence of mutations on $\Sigma$ corresponding to a sink sequence of the directed graph $\Gamma(\Sigma)$ and
 by the above definition we have ${B^{(n)}}={B}$, $x_i^{(i)} = x_i^{(j)}$ for any $1\leq i\leq j\leq n$ and $x_i^{(j)}=x_i$ for any $i\in [n+1,2n]$ and $j \in [1,n]$.

It is straightforward to obtain that \[{\widetilde{B}^{(i)}} =\scriptsize\left[ {\begin{array}{*{20}{c}}
		0&{{b_{12}}}& \cdots &{{b_{1i}}}&{ - {b_{1\, i + 1}}}&{ - {b_{1\,i + 2}}}& \cdots &{ - {b_{1\,n - 1}}}&{ - {b_{1n}}}\\
		{{b_{21}}}&0& \cdots &{{b_{2i}}}&{ - {b_{2\,i + 1}}}&{ - {b_{2\,i + 2}}}& \cdots &{ - {b_{2\,n - 1}}}&{ - {b_{2n}}}\\
		\vdots & \vdots & \ddots & \vdots & \vdots & \vdots & \ddots & \vdots & \vdots \\
		{{b_{i1}}}&{{b_{i2}}}& \cdots &0&{ - {b_{i\,i + 1}}}&{ - {b_{i\,i + 2}}}& \cdots &{ - {b_{i\,n - 1}}}&{ - {b_{in}}}\\
		{ - {b_{i + 1\,1}}}&{ - {b_{i + 1\,2}}}& \cdots &{ - {b_{i + 1\,i}}}&0&{{b_{i + 1\,i + 2}}}& \cdots &{{b_{i + 1\,n - 1}}}&{{b_{i + 1\,n}}}\\
		{ - {b_{i + 2\,1}}}&{ - {b_{i + 2\,2}}}& \cdots &{ - {b_{i + 2\,i}}}&{{b_{i + 2\,i + 1}}}&0& \cdots &{{b_{i + 2\,n - 1}}}&{{b_{i + 2\,n}}}\\
		\vdots & \vdots & \ddots & \vdots & \vdots & \vdots & \ddots & \vdots & \vdots \\
		{ - {b_{n - 1\,1}}}&{ - {b_{n - 1\,2}}}& \cdots &{ - {b_{n - 1\,i}}}&{{b_{n - 1\,i + 1}}}&{{b_{n - 1\,i + 2}}}& \cdots &0&{{b_{n - 1\,n}}}\\
		{ - {b_{n1}}}&{ - {b_{n2}}}& \cdots &{ - {b_{ni}}}&{{b_{n\,i + 1}}}&{{b_{n\,i + 2}}}& \cdots &{{b_{n\,n-1}}}&0\\
		{ - 1}&{}&{}&{}&{}&{}&{}&{}&{} \\
		{}&{ - 1}&{}&{}&{}&{}&{}&{}&{}\\
		{}&{}& \ddots &{}&{}&{}&{}&{}&{}\\
		{}&{}&{}&{ - 1}&{}&{}&{}&{}&{}\\
		{}&{}&{}&{}&1&{}&{}&{}&{}\\
		{}&{}&{}&{}&{}&1&{}&{}&{}\\
		{}&{}&{}&{}&{}&{}& \ddots &{}&{}\\
		{}&{}&{}&{}&{}&{}&{}&1&{}\\
		{}&{}&{}&{}&{}&{}&{}&{}&1
\end{array}} \right],\]

\[{\Lambda ^{(i)}} =\scriptsize\left[ {\begin{array}{*{20}{c}}
		0&{{\lambda _{12}}}& \cdots &{{\lambda _{1i}}}&{ - {\lambda _{1\,i + 1}}}&{ - {\lambda _{1\,i + 2}}}& \cdots &{ - {\lambda _{1\,2n - 1}}}&{ - {\lambda _{1\,2n}}}\\
		{{\lambda _{21}}}&0& \cdots &{{\lambda _{2i}}}&{ - {\lambda _{2\,i + 1}}}&{-{\lambda _{2\,i + 2}}}& \cdots &{ - {\lambda _{2\,2n - 1}}}&{ - {\lambda _{2\,2n}}}\\
		\vdots & \vdots & \ddots & \vdots & \vdots & \vdots & \ddots & \vdots & \vdots \\
		{{\lambda _{i1}}}&{{\lambda _{i2}}}& \cdots &0&{ - {\lambda _{i\,i + 1}}}&{ - {\lambda _{i\,i + 2}}}& \cdots &{ - {\lambda _{i\,2n - 1}}}&{ - {\lambda _{i\,2n}}}\\
		{ - {\lambda _{i + 1\,1}}}&{ - {\lambda _{i + 1\,2}}}& \cdots &{ - {\lambda _{i + 1\,i}}}&0&{{\lambda _{i + 1\,i + 2}}}& \cdots &{{\lambda _{i + 1\,2n - 1}}}&{{\lambda _{i + 1\,2n}}}\\
		{ - {\lambda _{i + 2\,1}}}&{ - {\lambda _{i + 2\,2}}}& \cdots &{ - {\lambda _{i + 2\,i}}}&{{\lambda _{i + 2\,i + 1}}}&0& \cdots &{{\lambda _{i + 2\,2n - 1}}}&{{\lambda _{i + 2\,2n}}}\\
		\vdots & \vdots & \ddots & \vdots & \vdots & \vdots & \ddots & \vdots & \vdots \\
		{ - {\lambda _{2n - 1\,1}}}&{ - {\lambda _{2n - 1\,2}}}& \cdots &{ - {\lambda _{2n - 1\,i}}}&{{\lambda _{2n - 1\,i + 1}}}&{{\lambda _{2n - 1\,i + 2}}}& \cdots &0&{{\lambda _{2n - 1\,2n}}}\\
		{ - {\lambda _{2n\,1}}}&{ - {\lambda _{2n\,2}}}& \cdots &{ - {\lambda _{2n\,i}}}&{{\lambda _{2n\,i + 1}}}&{{\lambda _{2n\,i + 2}}}& \cdots &{{\lambda _{2n\,2n - 1}}}&0
\end{array}} \right].\]

\begin{definition}\label{3.2}
Given an acyclic  quantum seed $ \Sigma$. The lower bound quantum cluster algebra ${\mathcal{L}^{(n)}}( \Sigma)$
is defined to be the algebra generated by all initial and quantum projective cluster variables  over $\ZZ\mathbb{P}$, i.e.,
$${\mathcal{L}^{(n)}}(\Sigma) = \ZZ\mathbb{P}[{x_1},x_1^{(n)}, \dots, {x_n}, x_n^{(n)}].$$
\end{definition}

It is obvious that ${\mathcal{L}^{(n)}}(\Sigma)$ is a finitely generated subalgebra of the quantum cluster algebra $\mathcal{A}(\Sigma)$.

The following well-known result will be used later.
\begin{lemma}\label{lemmaxy}
	If $xy=q^{-1}yx$, then ${(x + y)^n} = \sum\limits_{k = 0}^n {{{\left[ {\begin{array}{*{20}{c}}
						n\\
						k
				\end{array}} \right]}_q}} {x^k}{y^{n - k}},$
where	
$ {\left[ {\begin{array}{*{20}{c}}
				n\\
			k
		\end{array}} \right]} _q = \frac{{{{[n]}_q}{{[n - 1]}_q} \cdots {{[n - k + 1]}_q}}}{{{{[k]}_q}{{[k - 1]}_q} \cdots {{[1]}_q}}}$
and $[n]_q=\frac{q^n-1}{q-1}.$
\end{lemma}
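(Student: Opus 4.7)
The plan is to prove this quantum binomial theorem by induction on $n$, using the Pascal-type recursion for the $q$-binomial coefficients. The base cases $n=0,1$ are immediate from the conventions $\left[\begin{smallmatrix}0\\0\end{smallmatrix}\right]_q=\left[\begin{smallmatrix}1\\0\end{smallmatrix}\right]_q=\left[\begin{smallmatrix}1\\1\end{smallmatrix}\right]_q=1$.

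For the inductive step, I would first translate the hypothesis $xy=q^{-1}yx$ into the equivalent form $yx=qxy$, and then derive by a short induction on $\ell$ that $y^{\ell}x=q^{\ell}xy^{\ell}$, and hence $x^{k}y^{n-k}x=q^{n-k}x^{k+1}y^{n-k}$. Assuming the formula holds at stage $n$, I would compute
\begin{align*}
(x+y)^{n+1}
&=\Bigl(\sum_{k=0}^{n}\left[\begin{smallmatrix}n\\k\end{smallmatrix}\right]_q x^{k}y^{n-k}\Bigr)(x+y)\\
&=\sum_{k=0}^{n}\left[\begin{smallmatrix}n\\k\end{smallmatrix}\right]_q\bigl(q^{n-k}x^{k+1}y^{n-k}+x^{k}y^{n-k+1}\bigr),
\end{align*}
and then reindex and collect the coefficient of $x^{j}y^{n+1-j}$ to obtain
\[
\left[\begin{smallmatrix}n\\j\end{smallmatrix}\right]_q+q^{\,n-j+1}\left[\begin{smallmatrix}n\\j-1\end{smallmatrix}\right]_q.
\]

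The inductive step then reduces to the Pascal-type identity
\[
\left[\begin{smallmatrix}n+1\\j\end{smallmatrix}\right]_q=\left[\begin{smallmatrix}n\\j\end{smallmatrix}\right]_q+q^{\,n-j+1}\left[\begin{smallmatrix}n\\j-1\end{smallmatrix}\right]_q,
\]
which I would verify by direct manipulation from the definition $[m]_q=(q^{m}-1)/(q-1)$: factoring out the common product $[n]_q[n-1]_q\cdots [n-j+2]_q/([j]_q[j-1]_q\cdots [1]_q)$ on the right-hand side reduces the claim to the scalar identity $[n-j+1]_q+q^{n-j+1}[j]_q=[n+1]_q$, which is immediate from $(q^{n-j+1}-1)+q^{n-j+1}(q^{j}-1)=q^{n+1}-1$.

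The main obstacle is essentially bookkeeping: ensuring the $q$-power produced by commuting $x$ past the block $y^{n-k}$ matches the one appearing in the Pascal identity, and lining up the index shift $k\mapsto j-1$ versus $k\mapsto j$ correctly. No serious difficulty is expected, since once the commutation relation and the Pascal identity are in hand, the induction closes in one line.
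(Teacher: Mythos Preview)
Your induction argument is correct in every detail: the commutation $y^{\ell}x=q^{\ell}xy^{\ell}$, the reindexing, and the verification of the Pascal identity $[n-j+1]_q+q^{n-j+1}[j]_q=[n+1]_q$ all check out. The paper itself offers no proof of this lemma---it is simply quoted as a ``well-known result''---so there is nothing to compare against; your write-up is more than the paper provides.
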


The notation $ \prod\limits_{j \in [1,m]}^ \triangleleft$ means that the product is taken in increasing order with
respect to $\triangleleft$. For any ${\bf{a}}=(a_1, a_2, \dots, a_{2n})\in \mathbb{Z}^{2n}$ and $i \in [1,n],$  define $$({X^{(i)}})^{\bf{a}}:=q^{\frac{1}{2} \sum_{l<k} a_k a_l \lambda^{(i)}_{kl}}\prod\limits_{j \in [1,2n]}^ \triangleleft (x^{(i)}_j)^{a_j}.$$

\begin{lemma}\label{Lemma.r}
For any $k \in [1,n],$ 	we have
\begin{eqnarray}	\label{xin1}
		{({X^{(k - 1)}})^{ - {\e_k} + {{[\b_k^{(k - 1)}]}_ + }}}	&	=& 	{q^{\frac{1}{2}\big( { - \sum\limits_{t = 1}^{k - 1} {\sum\limits_{j = t + 1}^{k - 1} {{b_{tk}}{b_{jk}}\lambda _{tj}^{}} }  + \sum\limits_{j = 1}^{k - 1} {{b_{jk}}\lambda _{jk}^{}}  + \sum\limits_{j = k + 1}^n {{b_{jk}}\lambda _{kj}^{}}  + \lambda _{k\,n + k}^{}} \big)}}\nonumber\\
		&&	\cdot   {\prod\limits_{j \in [1,k - 1]}^ \triangleleft  {{{(x_j^{(k - 1)})}^{ - {b_{jk}}}}} }\cdot x_k^{ - 1}{({X^{(k - 1)}})^{\sum\limits_{j = k + 1}^n {{b_{jk}}{\e_j}}  + {\e_{n + k}}}}.
	\end{eqnarray}
\end{lemma}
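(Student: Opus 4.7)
The plan is to unfold the quantum monomial on the left side of \eqref{xin1} into the ordered product appearing on the right, and then to reduce the accumulated $q$-exponent to the expression given, by means of the compatibility condition. First, from the displayed form of $\widetilde{B}^{(k-1)}$ in Definition~\ref{3.1} together with the acyclicity of $B$ (which gives $b_{jk}\ge 0$ for $j>k$ and, via skew-symmetrizability, $-b_{jk}\ge 0$ for $j<k$), every entry of $\b_k^{(k-1)}$ is nonnegative, so $[\b_k^{(k-1)}]_+=\b_k^{(k-1)}$. The exponent $-\e_k+\b_k^{(k-1)}$ then splits cleanly as $\mathbf{u}-\e_k+\mathbf{w}$, where $\mathbf{u}=\sum_{j=1}^{k-1}(-b_{jk})\e_j$ and $\mathbf{w}=\sum_{j=k+1}^{n}b_{jk}\e_j+\e_{n+k}$.

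Using the multiplication rule \eqref{(ef)} iteratively in the form $X^{\mathbf{a}_1+\cdots+\mathbf{a}_r}=q^{-\frac{1}{2}\sum_{i<j}\Lambda^{(k-1)}(\mathbf{a}_i,\mathbf{a}_j)}X^{\mathbf{a}_1}\cdots X^{\mathbf{a}_r}$, I would expand $(X^{(k-1)})^{\mathbf{u}-\e_k+\mathbf{w}}$ into $\prod_{j\in[1,k-1]}^{\triangleleft}(x_j^{(k-1)})^{-b_{jk}}\cdot x_k^{-1}\cdot (X^{(k-1)})^{\mathbf{w}}$ and record the accumulated $q$-exponent $-\tfrac{1}{2}S$, where $S$ collects the pairwise $\Lambda^{(k-1)}$-pairings of the rank-one summands in $\mathbf{u}=\sum_j(-b_{jk})\e_j$ together with those between $\mathbf{u}$, $-\e_k$, and $\mathbf{w}$. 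I would then invoke the displayed form of $\Lambda^{(k-1)}$ to convert each $\lambda^{(k-1)}$-entry into an original $\lambda$-entry: entries with both indices in $[1,k-1]$ are unchanged, entries with one index in $[1,k-1]$ and the other in $[k,2n]$ flip sign, and entries with both indices in $[k,2n]$ are unchanged.

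After this substitution, $-S/2$ equals the sum of the four terms displayed on the right of \eqref{xin1} plus an extra contribution $-\tfrac{1}{2}\sum_{i=1}^{k-1}b_{ik}\big[\sum_{j=k+1}^{n}b_{jk}\lambda_{ij}+\lambda_{i,n+k}\big]$ arising from the pairings of $\mathbf{u}$ with $\mathbf{w}$. The main obstacle is showing that this extra contribution vanishes. For this I would apply the compatibility condition \eqref{(dj)} with index $i\ne k$, which together with the principal-coefficient identity $b_{n+t,k}=\delta_{tk}$ (read off from the lower part of $\widetilde{B}$) yields $\lambda_{i,n+k}=-\sum_{t=1}^{n}b_{tk}\lambda_{it}$ for each $i\le k-1$. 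Substituting this collapses the bracket to $-\sum_{t=1}^{k-1}b_{tk}\lambda_{it}$, so the extra contribution becomes $\tfrac{1}{2}\sum_{i,t=1}^{k-1}b_{ik}b_{tk}\lambda_{it}$, and this vanishes by skew-symmetry of $\Lambda$ (the $(i,t)$ and $(t,i)$ terms cancel and the diagonal is zero). What remains matches exactly the exponent claimed in \eqref{xin1}. The delicate point is the combined sign bookkeeping: the sign flip from $\Lambda^{(k-1)}$, the opposite sign from the compatibility identity, and the diagonal vanishing via skew-symmetry must align so that the residual cancels on the nose.
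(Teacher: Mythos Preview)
Your proposal is correct and follows essentially the same route as the paper's own proof: split the exponent as $\mathbf{u}-\e_k+\mathbf{w}$, factor via \eqref{(ef)}, convert the resulting $\Lambda^{(k-1)}$-pairings to $\Lambda$-pairings using the displayed form of $\Lambda^{(k-1)}$, and kill the $\mathbf{u}$--$\mathbf{w}$ cross term by combining the compatibility condition \eqref{(dj)} with the skew-symmetry of $\Lambda$. The paper organizes the same computation into three separate $\Lambda^{(k-1)}$-evaluations, but the substance (including the vanishing of $\sum_{i,t=1}^{k-1}b_{ik}b_{tk}\lambda_{it}$) is identical.
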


\begin{proof}
 A direct computation shows that
\begin{eqnarray*}
	&&	{({X^{(k - 1)}})^{ - {\e_k} + {{[\b_k^{(k - 1)}]}_ + }}}= {({X^{(k - 1)}})^{( - \sum\limits_{j = 1}^{k - 1} {{b_{jk}}{\e_j}} ) - {\e_k} + (\sum\limits_{j = k + 1}^n {{b_{jk}}{\e_j}} ) + {\e_{n + k}}}}\\
	&	=& {q^{ - \frac{1}{2}\big( {{\Lambda ^{(k - 1)}}( - \sum\limits_{j = 1}^{k - 1} {{b_{jk}}{\e_j}} , - \sum\limits_{t = j + 1}^{k - 1} {{b_{tk}}{\e_t}}  - {\e_k}) + {\Lambda ^{(k - 1)}}( - \sum\limits_{j = 1}^{k - 1} {{b_{jk}}{\e_j}} ,\sum\limits_{t = k + 1}^n {{b_{tk}}{\e_t}} + {\e_{n + k}})} \big)}}\\
	&&	\cdot {q^{ - \frac{1}{2}\big( {{\Lambda ^{(k - 1)}}( - {\e_k},\sum\limits_{j = k + 1}^n {{b_{jk}}{\e_j}}  + \e_{n + k})} \big)}}\cdot
	\prod\limits_{j \in [1,k - 1]}^ \triangleleft  {{{(x_j^{(k - 1)})}^{ - {b_{jk}}}}}\cdot x_k^{ - 1}{({X^{(k - 1)}})^{\sum\limits_{j = k + 1}^n {{b_{jk}}{\e_j}} + {\e_{n + k}}}}.
	\end{eqnarray*}
Note that we have
	\begin{eqnarray*}\label{(11)}
		&&{\Lambda ^{(k - 1)}}( - \sum_{j = 1}^{k - 1} {{b_{jk}}{\e_j}} , - \sum_{t =j+ 1}^{k - 1} {{b_{tk}}{\e_t}}  - {\e_k})\\
		&=& \sum_{j = 1}^{k - 1} {\sum\limits_{t = j + 1}^{k - 1} {{b_{jk}}{b_{tk}}\lambda _{jt}^{(k - 1)}} }  + \sum\limits_{j = 1}^{k - 1} {{b_{jk}}\lambda _{jk}^{(k - 1)}} = \sum\limits_{t = 1}^{k - 1} {\sum\limits_{j = t + 1}^{k - 1} {{b_{tk}}{b_{jk}}\lambda _{tj}^{}} }  - \sum\limits_{j = 1}^{k - 1} {{b_{jk}}\lambda _{jk}^{}},
	\end{eqnarray*}
	\begin{eqnarray*}\label{(12)}
		&&{\Lambda ^{(k - 1)}}( - \sum\limits_{j = 1}^{k - 1} {{b_{jk}}{\e_j}} ,\sum\limits_{t = k + 1}^n {{b_{tk}}{\e_t}}  + {\e_{n + k}})\\
		&=& \sum\limits_{j = 1}^{k - 1} {\sum\limits_{t = k + 1}^n {{b_{jk}}{b_{tk}}\lambda _{jt}^{}} }  + \sum\limits_{j = 1}^{k - 1} {{b_{jk}}\lambda _{j\,n + k}^{}}= \sum\limits_{j = 1}^{k - 1} {{b_{jk}}(\sum\limits_{t= k + 1}^n {{b_{tk}}\lambda _{jt}^{}}  + \lambda _{j\,n + k}^{})}\\
		&= &\sum\limits_{j = 1}^{k - 1} {{b_{jk}}\Lambda ({\e_j},  \sum\limits_{t = k + 1}^n {{b_{tk}}{\e_t}} +{\e_{n + k}})} \mathop  = \limits^{(\ref{(dj)})} -\sum\limits_{j = 1}^{k - 1} {{b_{jk}}\Lambda ({\e_j},\sum\limits_{t= 1}^{k - 1} {{b_{tk}}{\e_t}} )} \\
		&=&- \sum\limits_{j = 1}^{k - 1} {\sum\limits_{t= 1}^{k - 1} {{b_{jk}}{b_{tk}}{\lambda _{jt}}}  = 0},
	\end{eqnarray*}
and	
	\begin{eqnarray*}\label{(13)}
		{\Lambda ^{(k - 1)}}( - {\e_k},\sum\limits_{j = k + 1}^n {{b_{jk}}{\e_j}}  + {\e_{n + k}})&	= & - \sum\limits_{j = k + 1}^n {{b_{jk}}\lambda _{kj}^{}} - \lambda _{k\,n + k}^{}.
	\end{eqnarray*}
 Hence, the proof follows immediately.\end{proof}

\begin{proposition}\label{prop}
The algebra $\mathbb{ZP}[{x_1},x'_1, \dots, {x_n}, x'_n]$ is a $\mathbb{ZP}$-subalgebra of ${\mathcal{L}^{(n)}}(\Sigma).$
\end{proposition}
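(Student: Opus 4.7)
Since $x_1,\ldots,x_n\in\mathcal{L}^{(n)}(\Sigma)$ trivially, the statement reduces to showing that each $x_k'$ lies in $\mathcal{L}^{(n)}(\Sigma)$. My plan is to produce, for every $k\in[1,n]$, an explicit identity of the form
\[
\Bigl(\prod_{j\in[1,k-1]}^{\triangleleft} x_j^{-b_{jk}}\Bigr)\cdot p_k \;=\; q^{\rho_k}\, x_k' \;+\; F_k,
\]
where $p_k := x_k^{(n)}$ and $F_k\in\mathcal{L}^{(n)}(\Sigma)$; solving for $x_k'$ then exhibits it as a $\ZZ\mathbb{P}$-polynomial in $\{x_i,p_i\}_{i=1}^n$ and completes the proof.

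To produce the identity I would first combine Lemma~\ref{Lemma.r} with the mutation formula for $p_k=x_k^{(k)}$. Inspecting the displayed shape of $\widetilde B^{(k-1)}$ shows that $\b_k^{(k-1)}$ has only non-negative entries in positions $\neq k$, so the ``negative part'' summand in the mutation formula collapses to $(X^{(k-1)})^{-\e_k}=x_k^{-1}$, and Lemma~\ref{Lemma.r} rewrites the ``positive part'', giving
\[
p_k \;=\; q^{\delta_k}\, \prod_{j<k}^{\triangleleft} p_j^{-b_{jk}}\cdot x_k^{-1}\cdot M_k \;+\; x_k^{-1},\qquad M_k := X^{\sum_{j>k} b_{jk}\e_j + \e_{n+k}}.
\]
In parallel, the initial exchange relation \eqref{(er)} gives $x_k'=q^{\gamma_1}x_k^{-1}M_k+q^{\gamma_2}x_k^{-1}N_k$, where $N_k:=X^{-\sum_{j<k}b_{jk}\e_j}$ is a positive-exponent monomial in $x_1,\ldots,x_{k-1}$. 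Multiplying the expression for $p_k$ on the left by $N_k$, I would then replace each factor $p_j$ in $\prod_{j<k}^{\triangleleft}p_j^{-b_{jk}}$, in turn, by the companion identity $x_j p_j = 1+q^{\alpha_j}\prod_{i<j}^{\triangleleft} p_i^{-b_{ij}}\,M_j$ (obtained by applying Lemma~\ref{Lemma.r} at index $j$ and left-multiplying by $x_j$). Iterating inductively on $k$, the ``$+1$''-summands telescope into an element $F_k\in\mathcal{L}^{(n)}(\Sigma)$ (in fact, a polynomial in the initial and frozen variables), while the $M_j$-summands reassemble, via the quasi-commutation of $x_k^{-1}$ through $M_k$ and the $q^{\gamma_1}$-coefficient, into a scalar multiple of $x_k'$; the leftover $q^{\gamma_2}x_k^{-1}N_k$ piece is exactly compensated by the remaining $x_k^{-1}$ term of the formula for $p_k$.

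The main obstacle I anticipate is the intricate $q$-power bookkeeping throughout these commutations. Each move of $N_k$ past $\prod_{j<k}^{\triangleleft}p_j^{-b_{jk}}$, of $x_k^{-1}$ past $M_k$, and each telescope substitution contributes a scalar determined by $\Lambda$, and one must verify that all of these scalars collapse into the single clean $q^{\rho_k}$ on the right-hand side of the target identity, with no surviving $x_k^{-1}$'s. The compatibility condition \eqref{(dj)}, $\Lambda(\b_k,\e_i)=\delta_{ik}d_k$, is exactly what forces these cancellations, precisely as it does in the derivation of Lemma~\ref{Lemma.r}; tracking the cancellation carefully through the nested telescoping is where essentially all of the computational work lies.
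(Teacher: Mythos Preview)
Your overall strategy---left-multiply $p_k=x_k^{(k)}$ by $\prod_{j<k}^{\triangleleft} x_j^{-b_{jk}}$ and extract $q^{\rho_k}x_k'$ plus a remainder in $\mathcal{L}^{(n)}(\Sigma)$---is exactly the one the paper uses. But two points in your description are wrong, and one mechanism is missing.

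First, you have the roles of the two summands reversed. When you feed in $x_jp_j = 1 + q^{\alpha_j}(\cdots)M_j$, it is the ``$+1$'' contributions that survive to produce the scalar multiple of $x_k^{-1}M_k$ (and hence, together with $N_kx_k^{-1}$, the scalar multiple of $x_k'$), while the $M_j$-contributions land in the remainder $F_k$. The example in Section~5 already shows this for $k=2$: one gets $x_1p_2 = q^{-1}x_3^3x_4x_5 + q^{1/2}x_2'$, and the first summand comes from the $M_1$-piece.

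Second, $F_k$ is \emph{not} in general a polynomial in the initial and frozen variables alone: the remainder involves $p_j$ for $j<k$. The $k=3$ computation in Section~5 produces terms such as $(x_1^{(1)})^2x_2x_3^4x_4x_5^2x_6$. What one actually shows (and what the paper proves) is $F_k\in\mathbb{ZP}[x_1,p_1,\ldots,x_{k-1},p_{k-1},x_{k+1},\ldots,x_n]$, which suffices.

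Third, your substitution $x_jp_j=1+\cdots$ only pairs a single $x_j$ with a single $p_j$. When $-b_{jk}>1$ you must handle $x_j^{-b_{jk}}$ against $p_j^{-b_{jk}}$, and the clean tool is the $q$-binomial expansion of $p_j^{-b_{jk}}=(A_j+x_j^{-1})^{-b_{jk}}$ via Lemma~\ref{lemmaxy}, which is how the paper proceeds. The genuine content---that the negative $x$-exponents in $F_k$ cancel---comes from observing that every $M_j$-piece carries a factor $x_i^{b_{ij}}$ for each $i>j$, in particular $x_k^{b_{kj}}$ with $b_{kj}\ge 1$, which absorbs the stray $x_k^{-1}$; the paper checks this explicitly, and your sketch does not yet address it.
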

\begin{proof}
For any $i \in [1,n],$ according to the formula (\ref{(er)}), we have	
\begin{eqnarray}\label{(x')}
	{x'_i} &=& {X^{ - {\e_i} + {{[{\b_i}]}_ + }}} + {X^{ - {\e_i} + {{[ - {\b_i}]}_ + }}}\nonumber\\
	&= &{X^{ - {\e_i} + \sum\limits_{j = i + 1}^n {{b_{ji}}{\e_j}}  + {\e_{n + i}}}} + {X^{ - {\e_i} - \sum\limits_{j = 1}^{i - 1} {{b_{ji}}{\e_j}} }}\nonumber\\
	&=& {q^{\frac{1}{2}(  {\lambda _{i\, n + i}} + \sum\limits_{j = i + 1}^n {{b_{ji}}{\lambda _{ij}}} )}}x_i^{ - 1}{X^{\sum\limits_{j = i + 1}^n {{b_{ji}}{\e_j}}  + {\e_{n + i}}}}\nonumber\\
	 &&+ {q^{\frac{1}{2}( - \sum\limits_{t = 1}^{i - 1} {\sum\limits_{j = t + 1}^{i - 1} {{b_{ti}}{b_{ji}}{\lambda _{tj}}} }  - \sum\limits_{t = 1}^{i - 1} {{b_{ti}}{\lambda _{ti}}} )}}\prod\limits_{j \in [1,i - 1]}^ \triangleleft  {x_j^{ - {b_{ji}}}}\cdot  x_i^{ - 1}.
\end{eqnarray}

Now we consider ${\Sigma^{(i)}} =( {{\widetilde{\mathbf{x}} ^{(i)}},{\Lambda ^{(i)}},{{\widetilde B}^{(i)}}} )={\mu _{i-1}} ( {{\widetilde{\mathbf{x}}^{(i-1)}},{\Lambda ^{(i-1)}},{{\widetilde B}^{(i-1)}}} )$.
According to the formula (\ref{xin1}), we have
\begin{eqnarray}\label{(xkk1)}
	x_i^{(i)} &=& {q^{\frac{1}{2}\big( { - \sum\limits_{t = 1}^{i - 1} {\sum\limits_{r = t + 1}^{i - 1} {{b_{ti}}{b_{ri}}\lambda _{tr}^{}} }  + \sum\limits_{t = 1}^{i - 1} {{b_{ti}}\lambda _{ti}^{}}  + \sum\limits_{r = i + 1}^n {{b_{ri}}\lambda _{ir}^{}}  + \lambda _{i\,n + i}^{}} \big)}}\nonumber\\
	&&\cdot  {\prod\limits_{r \in [1,i - 1]}^ \triangleleft  {{{(x_r^{(i - 1)})}^{ - {b_{ri}}}}} }\cdot x_i^{ - 1}{({X^{(i - 1)}})^{\sum\limits_{r = i + 1}^n {{b_{ri}}{\e_r}}  + {\e_{n + i}}}} + x_i^{ - 1}.
\end{eqnarray}
We have  the following two cases.
\begin{enumerate}
\item When $b_{ri}=0$ for all $r \in [1,i-1]$, we obtain
\begin{eqnarray*}
	x_i^{(i)} &=& {q^{\frac{1}{2}\big( {\sum\limits_{r = i + 1}^n {{b_{ri}}\lambda _{ir}^{}}  + \lambda _{i\,n + i}^{}} \big)}}x_i^{ - 1}{({X^{(i - 1)}})^{\sum\limits_{r = i + 1}^n {{b_{ri}}{\e_r}} + {\e_{n + i}}}} + x_i^{ - 1}
	\mathop  = \limits^{(\ref{(x')})} {x'_i}.
\end{eqnarray*}
Thus ${x'_i} \in \mathbb{ZP}[{x_1},x_1^{(n)}, \ldots {x_n},x_n^{(n)}];$
\item When $b_{ri}\ne0$  for some $r \in [1,i - 1]$. According to  the formula (\ref{(xkk1)}), we  have
\begin{eqnarray*}
		x_i^{(i)} &=& {q^{\frac{1}{2}\big( { - \sum\limits_{t = 1}^{i - 1} {\sum\limits_{r = t + 1}^{i - 1} {{b_{ti}}{b_{ri}}\lambda _{tr}^{}} }  + \sum\limits_{t = 1}^{i - 1} {{b_{ti}}\lambda _{ti}^{}}  + \sum\limits_{r = i + 1}^n {{b_{ri}}\lambda _{ir}^{}}  + \lambda _{i\,n + i}^{}} \big)}}\\
		&&\cdot  {\prod_{r \in [1,i - 1]}^ \triangleleft  {{{({A_r} + {x_r^{ - 1}} )}^{ - {b_{ri}}}}} } \cdot x_i^{ - 1}{({X^{(i - 1)}})^{\sum\limits_{r = i + 1}^n {{b_{ri}}{\e_r}}  + {\e_{n + i}}}} + x_i^{ - 1},
\end{eqnarray*}
where ${A_r} = {q^{{\# _r}}}\cdot \prod\limits_{j \in [1,r - 1]}^ \triangleleft  {{{(x_j^{(r - 1)})}^{ - {b_{jr}}}}} \cdot x_r^{ - 1}\cdot \prod\limits_{j \in [r + 1,n]}^ \triangleleft  {x_j^{{b_{jr}}}} \cdot x_{n + r}^{}$ and ${\# _r} \in \frac{\mathbb{Z}}{2}.$

Multiplying both sides of the above equation from the left by $\prod\limits_{r \in [1,i - 1]}^ \triangleleft  {x_r^{ - {b_{ri}}}} $, we have
\begin{eqnarray*}
&&	\prod\limits_{r \in [1,i - 1]}^ \triangleleft  {x_r^{ - {b_{ri}}}} \cdot x_i^{(i)}\\
& =& {q^{\frac{1}{2}\big( { - \sum\limits_{t = 1}^{i - 1} {\sum\limits_{r = t + 1}^{i - 1} {{b_{ti}}{b_{ri}}\lambda _{tr}^{}} }  + \sum\limits_{t = 1}^{i - 1} {{b_{ti}}\lambda _{ti}^{}}  + \sum\limits_{r = i + 1}^n {{b_{ri}}\lambda _{ir}^{}}  + \lambda _{i\,n + i}^{}} \big)}} \cdot \prod\limits_{r \in [1,i - 1]}^ \triangleleft  {x_r^{ - {b_{ri}}}} \\
&&\cdot {\prod\limits_{r \in [1,i - 1]}^ \triangleleft  {{{({A_r} + x_r^{ - 1})}^{ - {b_{ri}}}}} } \cdot x_i^{ - 1}{({X^{(i - 1)}})^{\sum\limits_{r = i + 1}^n {{b_{ri}}{\e_r}}  + {\e_{n + i}}}}+ \prod\limits_{r \in [1,i - 1]}^ \triangleleft  {x_r^{ - {b_{ri}}}}\cdot x_i^{ - 1}.
\end{eqnarray*}

 According to Lemma \ref{lemmaxy}, we obtain 	
\begin{eqnarray*}	
&&	\prod\limits_{r \in [1,i - 1]}^ \triangleleft  {x_r^{ - {b_{ri}}}} \cdot x_i^{(i)}\\
	&	=& {q^{\frac{1}{2}\big( { - \sum\limits_{t = 1}^{i - 1} {\sum\limits_{r = t + 1}^{i - 1} {{b_{ti}}{b_{ri}}\lambda _{tr}^{}} }  + \sum\limits_{t = 1}^{i - 1} {{b_{ti}}\lambda _{ti}^{}}  + \sum\limits_{r = i + 1}^n {{b_{ri}}\lambda _{ir}^{}}  + \lambda _{i\,n + i}^{}} \big)}}	\cdot  \prod\limits_{r \in [1,i - 1]}^ \triangleleft  {x_r^{ - {b_{ri}}}}\\
	&&\cdot  \prod\limits_{r \in [1,i - 1]}^ \triangleleft  {x_r^{{b_{ri}}}} \cdot
	x_i^{ - 1}{({X^{(i - 1)}})^{\sum\limits_{r = i + 1}^n {{b_{ri}}{\e_r}}  + {\e_{n + i}}}}
+ \prod\limits_{r \in [1,i - 1]}^ \triangleleft  {x_r^{ - {b_{ri}}}} \cdot x_i^{ - 1} + f,
\end{eqnarray*}
where
\begin{eqnarray*}
	f &= & {q^*} \cdot \prod\limits_{r \in [1,i - 1]}^ \triangleleft  {x_r^{ - {b_{ri}}}}\cdot       \prod\limits_{r \in [1,i - 1]}^ \triangleleft  \big(\sum\limits_{h = 1}^{ - {b_{ri}}} {{C_{rh}} \cdot \prod\limits_{j \in [1,r - 1]}^ \triangleleft  {{{(x_j^{(r - 1)})}^{ - h{b_{jr}}}} \cdot x_r^{{b_{ri}}}\cdot \prod\limits_{j \in [r + 1,n]}^ \triangleleft  {x_j^{h{b_{jr}}}} \cdot x_{n + r}^h} } \big) \\
	&&	\cdot x_i^{ - 1}{({X^{(i - 1)}})^{\sum\limits_{r = i + 1}^n {{b_{ri}}{\e_r}}  + {\e_{n + i}}}},
\end{eqnarray*}
here ${*} \in \frac{\mathbb{Z}}{2}$, ${C_{rh}} \in \mathbb{Z}[{q^{ \pm \frac{1}{2}}}]$ and for convention denote $\sum\limits_{h = 1}^{0}:=1$.

Note that $\prod\limits_{r \in [1,i - 1]}^ \triangleleft  {x_r^{ - {b_{ri}}}} \cdot \prod\limits_{r \in [1,i - 1]}^ \triangleleft  {x_r^{  {b_{ri}}}} =
{q^{\sum\limits_{t = 1}^{i - 1} {\sum\limits_{r = t + 1}^{i - 1} {  {b_{ti}}{b_{ri}}{\lambda _{tr}}}}}}$.
Thus, we have
\begin{eqnarray*}
&&	\prod\limits_{r \in [1,i - 1]}^ \triangleleft  {x_r^{ - {b_{ri}}}} \cdot x_i^{(i)}\\
 &= &{q^{\frac{1}{2}\big( { - \sum\limits_{t = 1}^{i - 1} {\sum\limits_{r = t + 1}^{i - 1} {{b_{ti}}{b_{ri}}\lambda _{tr}^{}} }  + \sum\limits_{t = 1}^{i - 1} {{b_{ti}}\lambda _{ti}^{}}  + \sum\limits_{r = i + 1}^n {{b_{ri}}\lambda _{ir}^{}}  + \lambda _{i\,n + i}^{}} \big)}}\\
	&&\cdot {q^{\sum\limits_{t = 1}^{i - 1} {\sum\limits_{r = t + 1}^{i - 1} {{b_{ti}}{b_{ri}}{\lambda _{tr}}} } }}x_i^{ - 1}{({X^{(i - 1)}})^{\sum\limits_{r = i + 1}^n {{b_{ri}}{\e_r}}  + {\e_{n + i}}}} + \prod\limits_{r \in [1,i - 1]}^ \triangleleft  {x_r^{ - {b_{ri}}}} \cdot x_i^{ - 1} + f\\
&	=& {q^{\frac{1}{2}\big( {\sum\limits_{t = 1}^{i - 1} {\sum\limits_{r = t + 1}^{i - 1} {{b_{ti}}{b_{ri}}\lambda _{tr}^{}} }  + \sum\limits_{t = 1}^{i - 1} {{b_{ti}}\lambda _{ti}^{}}  + \sum\limits_{r = i + 1}^n {{b_{ri}}\lambda _{ir}^{}}  + \lambda _{i\,n + i}^{}} \big)}}\\
&&	\cdot x_i^{ - 1}{({X^{(i - 1)}})^{\sum\limits_{r = i + 1}^n {{b_{ri}}{\e_r}}  + {\e_{n + i}}}} + \prod\limits_{r \in [1,i - 1]}^ \triangleleft  {x_r^{ - {b_{ri}}}} \cdot x_i^{ - 1} + f\\
& = &{q^{\frac{1}{2}\big(\sum\limits_{t = 1}^{i - 1} {\sum\limits_{r = t + 1}^{i - 1} {{b_{ti}}{b_{ri}}\lambda _{tr}^{}} }  + \sum\limits_{t = 1}^{i - 1} {{b_{ti}}\lambda _{ti}^{}} \big)}}\big({q^{\frac{1}{2}\big(\sum\limits_{r = i + 1}^n {{b_{ri}}\lambda _{ir}^{}} + \lambda _{i\,n + i}^{}\big)}}x_i^{ - 1}{X^{\sum\limits_{r = i + 1}^n {{b_{ri}}{\e_r}}  + {\e_{n + i}}}}\\
&&	+ {q^{\frac{1}{2}\big( - \sum\limits_{t = 1}^{i - 1} {\sum\limits_{r = t + 1}^{i - 1} {{b_{ti}}{b_{ri}}{\lambda _{tr}}} }  - \sum\limits_{t = 1}^{i - 1} {{b_{ti}}{\lambda _{ti}}} \big)}}\prod\limits_{r \in [1,i - 1]}^ \triangleleft  {x_r^{ - {b_{ri}}}} \cdot x_i^{ - 1}\big) + f\\
&	\mathop  = \limits^{(\ref{(x')})} &{q^{\frac{1}{2}\big(\sum\limits_{t = 1}^{i - 1} {\sum\limits_{r = t + 1}^{i - 1} {{b_{ti}}{b_{ri}}\lambda _{tr}^{}} }  + \sum\limits_{t = 1}^{i - 1} {{b_{ti}}\lambda _{ti}^{}} \big)}}{x'_i} + f.
\end{eqnarray*}

We  claim that $f \in \mathbb{ZP}\left[ {{x_1}, x_1^{(1)}, \dots ,x_{i - 1},x_{i - 1}^{(i - 1)}},x_{i +1},\dots,{x_n} \right].$

We rewrite $f$ as
\begin{eqnarray*}
	f&\mathop  = \limits^{(\ref{(ij)})}&{q^*}\prod\limits_{r \in [1,i - 1]}^ \triangleleft  {x_r^{ - {b_{ri}}}} \cdot \prod\limits_{r \in [1,i - 1]}^ \triangleleft  \big({x_r^{{b_{ri}}}\big(\sum\limits_{h = 1}^{ - {b_{ri}}} {{G_{rh}}\prod\limits_{j \in [1,r - 1]}^ \triangleleft  {{{(x_j^{(r - 1)})}^{ - h{b_{jr}}}}} } \cdot \prod\limits_{j \in [r + 1,n]}^ \triangleleft  {x_j^{h{b_{jr}}}} \cdot x_{n + r}^h}\big) \big)\\
	&&\cdot x_i^{ - 1}{({X^{(i - 1)}})^{\sum\limits_{r = i + 1}^n {{b_{ri}}{\e_r}}  + {\e_{n + i}}}}\\
	&\mathop  = \limits^{(\ref{(ij)})}& {q^*}\prod\limits_{r \in [1,i - 1]}^ \triangleleft  {x_r^{ - {b_{ri}}}} \cdot \prod\limits_{r \in [1,i - 1]}^ \triangleleft  {x_r^{{b_{ri}}}} \cdot \prod\limits_{r \in [1,i - 1]}^ \triangleleft  {\big(\sum\limits_{h = 1}^{ - {b_{ri}}} {{P_{rh}}\prod\limits_{j \in [1,r - 1]}^ \triangleleft  {{{(x_j^{(r - 1)})}^{ - h{b_{jr}}}}} } } \\
&&\cdot \prod\limits_{j \in [r + 1,n]}^ \triangleleft  {x_j^{h{b_{jr}}}} \cdot x_{n + r}^h \big) \cdot x_i^{ - 1}{({X^{(i - 1)}})^{\sum\limits_{r = i + 1}^n {{b_{ri}}{\e_r}}  + {\e_{n + i}}}}\\
&	\mathop  = & {q^{{\bullet}}}\prod\limits_{r \in [1,i - 1]}^ \triangleleft  {\big(\sum\limits_{h = 1}^{ - {b_{ri}}} {{P_{rh}}\prod\limits_{j \in [1,r - 1]}^ \triangleleft  {{{(x_j^{(r - 1)})}^{ - h{b_{jr}}}}} } } \cdot \prod\limits_{j \in [r + 1,n]}^ \triangleleft  {x_j^{h{b_{jr}}}} \cdot x_{n + r}^h\big)\\
	&&\cdot x_i^{ - 1}{({X^{(i - 1)}})^{\sum\limits_{r = i + 1}^n {{b_{ri}}{\e_r}}  + {\e_{n + i}}}},
\end{eqnarray*}
where  ${\bullet} \in \frac{\mathbb{Z}}{2}$ and $G_{rh}, P_{rh} \in \mathbb{Z}[{q^{ \pm \frac{1}{2}}}].$

Note that there exists  some $r \in [1,i - 1]$ such that $b_{ri}\ne0$. Without loss of generality, assume that  $b_{1i}\ne 0$,  then we have
\begin{eqnarray*}
		f	&\mathop  = \limits^{(\ref{(ij)})}& {q^{{\bullet}}}\big(\sum\limits_{h = 1}^{ - {b_{1i}}} {{S_{1h}}\prod\limits_{j \in [2,n],j \ne i}^ \triangleleft  {x_j^{h{b_{j1}}}} \cdot x_{n + 1}^h}\cdot x_i^{h{b_{i1}} - 1}\big)\cdot x_i^{}\\
		&&\cdot \prod\limits_{r \in [2,i - 1]}^ \triangleleft  {\big(\sum\limits_{h = 1}^{ - {b_{ri}}} {{P_{rh}}\prod\limits_{j \in [1,r - 1]}^ \triangleleft  {{{(x_j^{(r - 1)})}^{ - h{b_{jr}}}}} } } \cdot \prod\limits_{j \in [r + 1,n]}^ \triangleleft  {x_j^{h{b_{jr}}}} \cdot x_{n + r}^h\big)\\
		&&\cdot x_i^{ - 1}{({X^{(i - 1)}})^{\sum\limits_{r = i + 1}^n {{b_{ri}}{\e_r}}  + {\e_{n + i}}}}\\
		&\mathop  = \limits^{(\ref{(ij)})}& {q^{{\bullet}}}\big(\sum\limits_{h = 1}^{ - {b_{1i}}} {{S_{1h}}\prod\limits_{j \in [2,n],j \ne i}^ \triangleleft  {x_j^{h{b_{j1}}}} \cdot x_{n + 1}^h}\cdot x_i^{h{b_{i1}} - 1}\big)\\
		&&\cdot \prod\limits_{r \in [2,i - 1]}^ \triangleleft  {\big(\sum\limits_{h = 1}^{ - {b_{ri}}} {{S_{rh}}\prod\limits_{j \in [1,r - 1]}^ \triangleleft  {{{(x_j^{(r - 1)})}^{ - h{b_{jr}}}}} } }\cdot  \prod\limits_{j \in [r + 1,n]}^ \triangleleft  {x_j^{h{b_{jr}}}} \cdot x_{n + r}^h\big)\\
		&&\cdot {({X^{(i - 1)}})^{\sum\limits_{r = i + 1}^n {{b_{ri}}{\e_r}}  + {\e_{n + i}}}},
\end{eqnarray*}
where  $ S_{rh}\in \mathbb{Z}[{q^{ \pm \frac{1}{2}}}].$
Then the claim follows,
and thus $x'_i \in \mathbb{ZP}[{x_1},x_1^{(n)}, \ldots ,{x_n},x_n^{(n)}].$
\end{enumerate}
The proof is completed.\end{proof}

\begin{theorem}\label{thm1}
If ${\Sigma}=( {\widetilde{\mathbf{x}},\Lambda,{\widetilde B}})$ is an acyclic  quantum seed, then $$\mathcal{A}( {\Sigma})={\mathcal{L}^{(n)}}( \Sigma).$$
\end{theorem}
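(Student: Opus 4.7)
The proof will be a short sandwich argument that combines the previous proposition with the quantum Berenstein--Zelevinsky theorem recalled as Theorem \ref{LA}. The plan is to show the two inclusions $\mathcal{L}^{(n)}(\Sigma)\subseteq\mathcal{A}(\Sigma)$ and $\mathcal{A}(\Sigma)\subseteq\mathcal{L}^{(n)}(\Sigma)$ separately, and the heavy lifting has already been carried out in Proposition \ref{prop}.

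First I would observe that $\mathcal{L}^{(n)}(\Sigma)\subseteq\mathcal{A}(\Sigma)$ is immediate from the definitions: by Definition \ref{3.1} each quantum projective cluster variable $x_i^{(n)}$ arises from the sequence of mutations $\mu_i\cdots\mu_2\mu_1$ applied to the initial seed $\Sigma$, hence lies in the quantum seed $\Sigma^{(n)}$, which is mutation-equivalent to $\Sigma$. Together with the initial variables $x_1,\ldots,x_n$ and the coefficient ring $\mathbb{ZP}$, these are all elements of $\mathcal{A}(\Sigma)$, so the $\mathbb{ZP}$-subalgebra they generate sits inside $\mathcal{A}(\Sigma)$.

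For the reverse inclusion I would chain the following two facts. By Proposition \ref{prop}, the one-step mutation algebra satisfies
\begin{equation*}
\mathbb{ZP}[x_1,x_1',\ldots,x_n,x_n']\;\subseteq\;\mathcal{L}^{(n)}(\Sigma).
\end{equation*}
Since $\Sigma$ is by hypothesis an acyclic quantum seed, Theorem \ref{LA} (the quantum version of the Berenstein--Fomin--Zelevinsky theorem, i.e.\ \cite[Theorem 7.6]{AA}) gives
\begin{equation*}
\mathcal{L}(\widetilde{\mathbf{x}},\Lambda,\widetilde{B})\;=\;\mathbb{ZP}[x_1,x_1',\ldots,x_n,x_n']\;=\;\mathcal{A}(\Sigma).
\end{equation*}
Concatenating these inclusions yields $\mathcal{A}(\Sigma)\subseteq\mathcal{L}^{(n)}(\Sigma)$, and combined with the trivial inclusion of the previous paragraph gives the desired equality.

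Since the deep content (Theorem \ref{LA} and Proposition \ref{prop}) is already in place, there is essentially no remaining obstacle: the theorem is a formal consequence obtained by pinching $\mathcal{L}^{(n)}(\Sigma)$ between $\mathbb{ZP}[x_1,x_1',\ldots,x_n,x_n']$ and $\mathcal{A}(\Sigma)$, which coincide in the acyclic case. The only point worth flagging explicitly is the verification that the hypothesis of Theorem \ref{LA} is met, namely that acyclicity of the initial seed $\Sigma$ (in the sense of Definition of an acyclic quantum seed, i.e.\ $\Gamma(\Sigma)$ has no oriented cycle, equivalently $B$ is acyclic after the reordering $\triangleleft$ fixed at the start of Section 3) is exactly the condition assumed in the statement being proved.
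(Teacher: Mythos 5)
Your proposal is correct and follows essentially the same route as the paper, which deduces the theorem directly from Theorem \ref{LA} and Proposition \ref{prop}; you merely spell out the two inclusions (the trivial one $\mathcal{L}^{(n)}(\Sigma)\subseteq\mathcal{A}(\Sigma)$ and the sandwich $\mathcal{A}(\Sigma)=\mathbb{ZP}[x_1,x_1',\ldots,x_n,x_n']\subseteq\mathcal{L}^{(n)}(\Sigma)$) that the paper leaves implicit.
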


\begin{proof}
It follows from Theorem \ref{LA} and Proposition \ref{prop}.
\end{proof}

\section{The dual PBW bases}
In this section, we first establish
a class of formulas for acyclic  quantum cluster algebras with principal coefficients, and then
  construct the dual PBW bases of these algebras.

\begin{lemma}\label{519}
For any $k\in [1,n]$ and $l \in \mathbb{N}$, we have
	\begin{equation}\label{(xknk1)}
	{(x_k^{(n)})^l}{x_k}  - {x_k}{(x_k^{(n)})^l} =	f_{k,l}^{(k)},
	\end{equation}
where
$f_{k,l}^{(k)} = {g_{k,k,l}}\prod\limits_{j \in [k + 1,2n]}^ \triangleleft  {x_j^{{b_{jk}}}} \cdot \prod\limits_{j \in [1,k - 1]}^ \triangleleft  {{{(x_j^{(n)})}^{ - {b_{jk}}}}}  \cdot  {(x_k^{(n)})^{l - 1}}$
and ${g_{k,k,l}}\in \mathbb{Z}[{q^{ \pm \frac{1}{2}}}].$	

\end{lemma}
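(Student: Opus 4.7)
The plan is to proceed by induction on $l$, building on the explicit formula for $x_k^{(n)} = x_k^{(k)}$ derived from Lemma \ref{Lemma.r} and the mutation rule \eqref{(er)}.

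For the base case $l = 1$, write $x_k^{(n)} = M_k + x_k^{-1}$, where $M_k := X^{(k-1),\,-\e_k + [\b_k^{(k-1)}]_+}$ is the quantum monomial made explicit in \eqref{xin1}. Since $x_k = X^{(k-1),\,\e_k}$ commutes with $x_k^{-1}$, the commutator $x_k^{(n)} x_k - x_k x_k^{(n)}$ collapses to $M_k x_k - x_k M_k$. The quasi-commutativity \eqref{(ef)} in $\mathcal{T}^{(k-1)}$ gives $M_k x_k = q^{\Lambda^{(k-1)}([\b_k^{(k-1)}]_+,\,\e_k)}\, x_k M_k$. Because $\Sigma^{(k-1)}$ is obtained from $\Sigma$ by mutating along the sink sequence $1, 2, \ldots, k-1$, the vertex $k$ is a sink in $\Gamma(\Sigma^{(k-1)})$, so $[-\b_k^{(k-1)}]_+ = 0$ and $[\b_k^{(k-1)}]_+ = \b_k^{(k-1)}$. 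The compatibility condition \eqref{(dj)} for $(\widetilde{B}^{(k-1)}, \Lambda^{(k-1)})$ then yields $\Lambda^{(k-1)}(\b_k^{(k-1)}, \e_k) = d_k$, so
$$M_k x_k - x_k M_k = (q^{d_k} - 1)\, x_k M_k = (q^{d_k/2} - q^{-d_k/2})\, X^{(k-1),\,[\b_k^{(k-1)}]_+}.$$
Reordering the right-hand monomial into $N_k := \prod_{j\in[k+1,2n]}^\triangleleft x_j^{b_{jk}} \cdot \prod_{j\in[1,k-1]}^\triangleleft (x_j^{(n)})^{-b_{jk}}$ introduces only an explicit $q$-power, producing $f_{k,1}^{(k)} = g_{k,k,1} N_k$ for some $g_{k,k,1} \in \mathbb{Z}[q^{\pm \frac{1}{2}}]$.

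For the inductive step, suppose $(x_k^{(n)})^l x_k - x_k (x_k^{(n)})^l = g_{k,k,l} N_k (x_k^{(n)})^{l-1}$. Then
\begin{align*}
(x_k^{(n)})^{l+1} x_k
&= x_k^{(n)}\bigl[x_k (x_k^{(n)})^l + g_{k,k,l} N_k (x_k^{(n)})^{l-1}\bigr]\\
&= \bigl[x_k x_k^{(n)} + g_{k,k,1} N_k\bigr](x_k^{(n)})^l + g_{k,k,l}\, x_k^{(n)} N_k\, (x_k^{(n)})^{l-1}.
\end{align*}
The pivotal observation is that $N_k$ quasi-commutes with $x_k^{(n)}$: every factor of $N_k$ lies in the extended cluster $\widetilde{\mathbf{x}}^{(k)}$ of $\Sigma^{(k)}$, since $\{x_j^{(n)} : j < k\} \cup \{x_j : k < j \leq n\}$ together with $x_k^{(n)}$ form the non-frozen cluster of $\Sigma^{(k)}$, while $\{x_j : n < j \leq 2n\}$ are frozen. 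Hence $x_k^{(n)} N_k = q^\beta N_k x_k^{(n)}$ for some $\beta \in \frac{1}{2}\mathbb{Z}$ computable from $\Lambda^{(k)}$, and substituting yields
$$(x_k^{(n)})^{l+1} x_k - x_k (x_k^{(n)})^{l+1} = (g_{k,k,1} + q^\beta g_{k,k,l})\, N_k\, (x_k^{(n)})^l,$$
which has the desired form $f_{k,l+1}^{(k)} = g_{k,k,l+1} N_k (x_k^{(n)})^l$ with $g_{k,k,l+1} := g_{k,k,1} + q^\beta g_{k,k,l} \in \mathbb{Z}[q^{\pm \frac{1}{2}}]$.

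The main obstacle is the careful $q$-power bookkeeping, especially verifying the quasi-commutation $x_k^{(n)} N_k = q^\beta N_k x_k^{(n)}$ and computing $\beta$ explicitly. This ultimately reduces to summing the appropriate entries of $\Lambda^{(k)}$ from the displayed matrix in Section 3; the computation is tedious but mechanical, given that all relevant variables belong to the common extended cluster of $\Sigma^{(k)}$.
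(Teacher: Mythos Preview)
Your proposal is correct and follows essentially the same route as the paper. The base case is handled identically (the paper computes $x_k^{(n)}x_k$ and $x_kx_k^{(n)}$ separately rather than writing $x_k^{(n)}=M_k+x_k^{-1}$, but the content is the same), and for general $l$ the paper writes the telescoping identity $f_{k,l}^{(k)}=\sum_{r=1}^l (x_k^{(n)})^{l-r}f_{k,1}^{(k)}(x_k^{(n)})^{r-1}$ directly and then uses the very quasi-commutation $x_k^{(n)}N_k=q^{\beta}N_kx_k^{(n)}$ you isolate (with $\beta=-d_k$), which is just the unrolled form of your induction.
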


\begin{proof}
According to the formula (\ref{(er)}), we have
	\begin{eqnarray*}
		x_k^{(n)}{x_k}  &=& {({X^{(k - 1)}})^{ - {\e_k} + {{[\b_k^{(k - 1)}]}_ + }}}{x_k} + {({X^{(k - 1)}})^{ - {\e_k} + {{[ - \b_k^{(k - 1)}]}_ + }}}{x_k}\\
		&=& {({X^{(k - 1)}})^{ - {\e_k} + \b_k^{(k - 1)}}}{x_k} + {({X^{(k - 1)}})^{ - {\e_k}}}{x_k}
		={q^{\frac{d_k}{2}}}{({X^{(k - 1)}})^{\b_k^{(k - 1)}}} + 1.
\end{eqnarray*}

Similarly,	we have
	${x_k}x_k^{(n)}	= {q^{ - \frac{d_k}{2}}}{({X^{(k - 1)}})^{\b_k^{(k - 1)}}} + 1.$
Thus
	\begin{eqnarray*}
	x_k^{(n)}{x_k} - {x_k}x_k^{(n)} &=& {q^{ - \frac{d_k}{2}}}({q^{{d_k}}} - 1){({X^{(k - 1)}})^{\b_k^{(k - 1)}}}\\
&\mathop  = \limits^{(\ref{(ef)})} &{g_{k,k,1}} \prod\limits_{j \in [k + 1,2n]}^ \triangleleft  {x_j^{{b_{jk}}}}\cdot\prod\limits_{j \in [1,k - 1]}^ \triangleleft  {{{(x_j^{(n)})}^{ - {b_{jk}}}}} =:f_{k,1}^{(k)},	
\end{eqnarray*}
where $g_{k,k,1} \in \mathbb{Z}[{q^{ \pm \frac{1}{2}}}].$

For $l\geq 2$, we have
\begin{eqnarray*}
		{(x_k^{(n)})^l}{x_k}&  =  & {x_k}{(x_k^{(n)})^l} + 	f_{k,l}^{(k)},
\end{eqnarray*}
where
$f_{k,l}^{(k)} = \sum\limits_{r = 1}^l {{{(x_k^{(n)})}^{l - r}}f_{k,1}^{(k)}{{(x_k^{(n)})}^{r - 1}}}.$

Then, we have		
	\begin{eqnarray*}		
	f_{k,l}^{(k)} 	&=& \sum\limits_{r = 1}^l {{g_{k,k,1}} {{(x_k^{(n)})}^{l - r}}\cdot\prod\limits_{j \in [k + 1,2n]}^ \triangleleft  {x_j^{{b_{jk}}}}\cdot\prod\limits_{j \in [1,k - 1]}^ \triangleleft  {{{(x_j^{(n)})}^{ - {b_{jk}}}}} \cdot {{(x_k^{(n)})}^{r - 1}}} \\
		&	\mathop  = \limits^{(\ref{(ij)})}& \sum\limits_{r = 1}^l {{q^{ - (l - r){d_k}}}{g_{k,k,1}}\prod\limits_{j \in [k + 1,2n]}^ \triangleleft  {x_j^{{b_{jk}}}} \cdot\prod\limits_{j \in [1,k - 1]}^ \triangleleft  {{{(x_j^{(n)})}^{ - {b_{jk}}}}} \cdot  {{(x_k^{(n)})}^{l - 1}}} \\
			&= &{g_{k,k,l}} \prod\limits_{j \in [k + 1,2n]}^ \triangleleft  {x_j^{{b_{jk}}}}\cdot \prod\limits_{j \in [1,k - 1]}^ \triangleleft  {{{(x_j^{(n)})}^{ - {b_{jk}}}}}  \cdot {(x_k^{(n)})^{l - 1}},
\end{eqnarray*}
where ${g_{k,k,l}}   \in \mathbb{Z}[{q^{ \pm \frac{1}{2}}}].$	

The proof is completed.\end{proof}

\begin{lemma}\label{520}
For any $j \in[1,n]$ and $l\in \mathbb{N}$, we have
		\begin{equation}\label{(xjnj-1)}
			{(x_j^{(n)})^l}{x_{j - 1}} - {q^{ - l{\lambda _{j\, j - 1}}}}{x_{j - 1}}{(x_j^{(n)})^l} = f_{j - 1,l}^{(j)},
		\end{equation}
		where $f_{j - 1,1}^{(j)} \in \mathbb{ZP}[x_1^{(n)}, \dots ,x_{j-1}^{(n)},{x_{j }}, \dots, {x_n}],$ and $f_{j - 1,l}^{(j)} \in \mathbb{ZP}[x_1^{(n)}, \dots ,x_{j-1}^{(n)},x_{j}^{(n)},{x_{j }}, \dots, {x_n}]$ for any $l\geq 2$.
\end{lemma}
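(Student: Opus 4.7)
The plan mirrors Lemma~\ref{519}: handle $l=1$ by a direct expansion using the mutation formula for $x_j^{(n)}$, then induct on $l$ for $l\ge 2$. Because the order $\triangleleft$ forces $b_{rj}\le 0$ for $r<j$ and $b_{rj}\ge 0$ for $r>j$, the column $\b_j^{(j-1)}$ of $\widetilde B^{(j-1)}$ has only nonnegative entries, so $[-\b_j^{(j-1)}]_{+}=0$ and the mutation formula for $x_j^{(n)}=\mu_j(x_j)$ collapses to
\[
x_j^{(n)} \;=\; Y + x_j^{-1}, \qquad Y := (X^{(j-1)})^{-\e_j + [\b_j^{(j-1)}]_{+}}.
\]
Since $x_j^{-1} x_{j-1} = q^{-\lambda_{j,j-1}} x_{j-1} x_j^{-1}$ holds exactly in $\mathcal{T}(\Lambda)$, the summand $x_j^{-1}$ contributes nothing to the commutator, and for $l=1$ it suffices to compute $Y x_{j-1} - q^{-\lambda_{j,j-1}} x_{j-1} Y$.

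Apply Lemma~\ref{Lemma.r} with $k=j$ to factor $Y = q^{\alpha_j}\, P \cdot x_j^{-1} \cdot R$, where $P = \prod^{\triangleleft}_{r\in[1,j-1]} (x_r^{(n)})^{-b_{rj}}$ and $R = (X^{(j-1)})^{\sum_{r=j+1}^n b_{rj}\e_r + \e_{n+j}}$ is a nonnegative monomial in the initial variables $x_{j+1},\dots,x_n$ and the frozen $x_{n+j}$. I then push $x_{j-1}$ leftward factor by factor. First, $R$ is built from initial variables that quasi-commute with $x_{j-1}$ via the original $\Lambda$, so $R x_{j-1} = q^{\eta} x_{j-1} R$ with $\eta = \sum_{r=j+1}^{n} b_{rj}\lambda_{r,j-1} + \lambda_{n+j,j-1}$, which the original compatibility $\Lambda(\b_j,\e_{j-1})=0$ rewrites as $\eta = -\sum_{r=1}^{j-1} b_{rj}\lambda_{r,j-1}$. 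Next, $x_j^{-1} x_{j-1} = q^{-\lambda_{j,j-1}} x_{j-1} x_j^{-1}$. Finally, split $P = P' \cdot (x_{j-1}^{(n)})^{-b_{j-1,j}}$ with $P' = \prod^{\triangleleft}_{r\in[1,j-2]}(x_r^{(n)})^{-b_{rj}}$: Lemma~\ref{519} at $k=j-1$ supplies $(x_{j-1}^{(n)})^{-b_{j-1,j}} x_{j-1} = x_{j-1}(x_{j-1}^{(n)})^{-b_{j-1,j}} + f^{(j-1)}_{j-1,\,-b_{j-1,j}}$, and each $x_r^{(n)}$ with $r\le j-2$, being a cluster variable of the seed $\Sigma^{(j-2)}$, quasi-commutes with $x_{j-1}$ through $\Lambda^{(j-2)}$, so $P' x_{j-1} = q^{\zeta} x_{j-1} P'$ with $\zeta = -\sum_{r=1}^{j-2} b_{rj}\lambda^{(j-2)}_{r,j-1}$.

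The main obstacle is the identification $\eta + \zeta = 0$, which is what makes the collected $q$-factors telescope to precisely $q^{-\lambda_{j,j-1}}$. Since $\mu_1,\dots,\mu_{j-2}$ alter only the rows/columns $\{1,\dots,j-2\}$ of $\Lambda$, one has $\lambda^{(j-2)}_{r,j-1}=\lambda_{r,j-1}$ for all $r\ge j-1$. The compatibility $\Lambda^{(j-2)}(\b_j^{(j-2)},\e_{j-1})=0$, combined with $b^{(j-2)}_{rj} = -b_{rj}$ for $r\le j-2$, then yields $\sum_{r=1}^{j-2} b_{rj}\lambda^{(j-2)}_{r,j-1} = \sum_{r=j-1}^n b_{rj}\lambda_{r,j-1} + \lambda_{n+j,j-1}$; invoking the original compatibility together with $\lambda_{j-1,j-1}=0$ simplifies the right-hand side to $-\sum_{r=1}^{j-2} b_{rj}\lambda_{r,j-1} = \eta$, i.e., $\zeta = -\eta$ as required. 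Consequently
\[
Y x_{j-1} \;=\; q^{-\lambda_{j,j-1}} x_{j-1} Y \;+\; q^{\bullet}\, P' \cdot f^{(j-1)}_{j-1,\,-b_{j-1,j}} \cdot x_j^{-1} \cdot R,
\]
and I declare $f^{(j)}_{j-1,1}$ to be the second summand. The $x_j^{-1}$ is absorbed by the $x_j^{b_{j,j-1}}$-factor Lemma~\ref{519} explicitly extracts from $f^{(j-1)}_{j-1,\,-b_{j-1,j}}$, where $b_{j,j-1}\ge 1$ whenever $b_{j-1,j}\ne 0$ (the only case in which the error is nonzero); the remaining expression has only nonnegative powers of the initial variables and therefore lies in $\mathbb{ZP}[x_1^{(n)},\dots,x_{j-1}^{(n)},x_j,\dots,x_n]$. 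For $l\ge 2$, a straightforward induction via $(x_j^{(n)})^l x_{j-1} = x_j^{(n)} \cdot (x_j^{(n)})^{l-1} x_{j-1}$ and the identities for $l=1$ and $l-1$ yields the recursion
\[
f^{(j)}_{j-1,l} \;=\; q^{-(l-1)\lambda_{j,j-1}}\, f^{(j)}_{j-1,1}\,(x_j^{(n)})^{l-1} \;+\; x_j^{(n)}\, f^{(j)}_{j-1,l-1},
\]
which now visibly belongs to $\mathbb{ZP}[x_1^{(n)},\dots,x_{j-1}^{(n)},x_j^{(n)},x_j,\dots,x_n]$, completing the proof.
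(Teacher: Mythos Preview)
Your proof is correct and follows essentially the same route as the paper: expand $x_j^{(n)}$ via Lemma~\ref{Lemma.r}, push $x_{j-1}$ leftward through $R$, $x_j^{-1}$, and $P$ using quasi-commutation and Lemma~\ref{519}, and then telescope for $l\ge 2$. The only cosmetic difference is that you derive the cancellation $\eta+\zeta=0$ from the two compatibility relations $\Lambda(\b_j,\e_{j-1})=\Lambda^{(j-2)}(\b_j^{(j-2)},\e_{j-1})=0$, whereas the paper reads the identity $\lambda^{(j-2)}_{r,j-1}=-\lambda_{r,j-1}$ directly from the explicit matrix of $\Lambda^{(i)}$ displayed before Definition~\ref{3.2}.
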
			
\begin{proof}
We have
	\begin{eqnarray*}
	x_j^{(n)}{x_{j - 1}}	&\mathop  = \limits^{(\ref{xin1})} & {q^{\frac{1}{2}\big( { - \sum\limits_{k = 1}^{j - 1} {\sum\limits_{t = k + 1}^{j - 1} {{b_{kj}}{b_{tj}}\lambda _{kt}^{}} }  + \sum\limits_{k = 1}^{j - 1} {{b_{kj}}\lambda _{kj}^{}}  + (\sum\limits_{k = j + 1}^n {{b_{kj}}\lambda _{jk}^{}} ) + \lambda _{j\, n + j}^{}} \big)}}\\
		&&	\cdot \prod\limits_{k \in [1,j - 1]}^ \triangleleft  {{{(x_k^{(n)})}^{ - b_{kj}^{}}}} \cdot x_j^{ - 1}{({X^{(j - 1)}})^{\sum\limits_{t = j + 1}^n {{b_{tj}}{\e_t}}  + {\e_{n + j}}}}{x_{j - 1}} + x_j^{ - 1}{x_{j - 1}}\\
		&\mathop  = \limits^{(\ref{(ij)})}&{q^{{M_j} - {\lambda _{j\,j - 1}} + \sum\limits_{t = j + 1}^n {b_{tj}^{}{\lambda _{t\, j - 1}}}  + {\lambda _{n + j\,j - 1}}}}\prod\limits_{k \in [1,j - 1]}^ \triangleleft  {{{(x_k^{(n)})}^{ - b_{kj}^{}}}} \cdot {x_{j - 1}}\\
		&&	\cdot x_j^{ - 1}{({X^{(j - 1)}})^{\sum\limits_{t = j + 1}^n {{b_{tj}}{\e_t}} + {\e_{n + j}}}} + {q^{ - {\lambda _{j\,j - 1}}}}{x_{j - 1}}x_j^{ - 1},
	\end{eqnarray*}	
where	${M_j} = \frac{1}{2}\big( { - \sum\limits_{k = 1}^{j - 1} {\sum\limits_{t = k + 1}^{j - 1} {{b_{kj}}{b_{tj}}\lambda _{kt}^{}} }  + \sum\limits_{k = 1}^{j - 1} {{b_{kj}}\lambda _{kj}^{}}  + \sum\limits_{k = j + 1}^n {{b_{kj}}\lambda _{jk}^{}}  + \lambda _{j\,n + j}^{}} \big).$

Note that
	\begin{eqnarray}\label{(123456)}
		\sum\limits_{t = j + 1}^n {b_{tj}^{}{\lambda _{t\,j - 1}}}  + {\lambda _{n + j\,j - 1}} + \sum\limits_{k = 1}^{j - 2} {b_{kj}^{}{\lambda _{k\,j - 1}}} & = &\Lambda ({\b_j},{\e_{j - 1}}) = 0.
	\end{eqnarray}
Thus, we  have
	\begin{eqnarray*}
	x_j^{(n)}{x_{j - 1}}	&=& {q^{{M_j} - {\lambda _{j\, j - 1}}{\rm{ - }}\sum\limits_{k = 1}^{j - 2} {b_{kj}^{}{\lambda _{k\, j - 1}}} }}\prod\limits_{k \in [1,j - 2]}^ \triangleleft  {{{(x_k^{(n)})}^{ - b_{kj}^{}}}}\cdot {(x_{j - 1}^{(n)})^{ - b_{j - 1\,j}^{}}}{x_{j - 1}}\\
		&&\cdot x_j^{ - 1}{({X^{(j - 1)}})^{\sum\limits_{t = j + 1}^n {{b_{tj}}{\e_t}}  + {\e_{n + j}}}} + {q^{ - {\lambda _{jj - 1}}}}{x_{j - 1}}x_j^{ - 1}\\
		&	\mathop  = \limits^{(\ref{(xknk1)})}& {q^{{M_j} - {\lambda _{j\,j - 1}}{\rm{ - }}\sum\limits_{k = 1}^{j - 2} {b_{kj}^{}{\lambda _{k\,j - 1}}} }}\prod\limits_{k \in [1,j - 2]}^ \triangleleft  {{{(x_k^{(n)})}^{ - b_{kj}^{}}}} \\
		&&\cdot \left( {{x_{j - 1}}{{(x_{j - 1}^{(n)})}^{ - b_{j - 1\,j}^{}}} + f_{j - 1, - {b_{j - 1\,j}}}^{(j - 1)}} \right)x_j^{ - 1}{({X^{(j - 1)}})^{\sum\limits_{t = j + 1}^n {{b_{tj}}{\e_t}}  + {\e_{n + j}}}}\\
		&&	+ {q^{ - {\lambda _{j\,j - 1}}}}{x_{j - 1}}x_j^{ - 1}\\
		&= &{q^{{M_j} - {\lambda _{j\,j - 1}}{\rm{ - }}\sum\limits_{k = 1}^{j - 2} {b_{kj}^{}{\lambda _{k\,j - 1}}} }}\prod\limits_{k \in [1,j - 2]}^ \triangleleft  {{{(x_k^{(n)})}^{ - b_{kj}^{}}}} \cdot {x_{j - 1}}{(x_{j - 1}^{(n)})^{ - b_{j - 1\,j}^{}}}\\
		&&\cdot x_j^{ - 1}{({X^{(j - 1)}})^{(\sum\limits_{t = j + 1}^n {{b_{tj}}{\e_t}} ) + {\e_{n + j}}}} + {q^{ - {\lambda _{j\,j - 1}}}}{x_{j - 1}}x_j^{ - 1} + f_{j - 1,1}^{(j)}\\
		&\mathop  = \limits^{(\ref{(ij)})}& {q^{{M_j} - {\lambda _{j\,j - 1}}}}  {x_{j - 1}}\prod\limits_{k \in [1,j - 1]}^ \triangleleft  {{{(x_k^{(n)})}^{ - b_{kj}^{}}}} \cdot x_j^{ - 1}{({X^{(j - 1)}})^{\sum\limits_{t = j + 1}^n {{b_{tj}}{\e_t}}  + {\e_{n + j}}}} \\
		&&+ {q^{ - {\lambda _{j\,j - 1}}}}{x_{j - 1}}x_j^{ - 1} +f_{j - 1,1}^{(j)},	
	\end{eqnarray*}
where
	\begin{eqnarray}\label{ff}
	f_{j - 1,1}^{(j)}	&=& {q^{{M_j} - {\lambda _{j\,j - 1}}{\rm{ - }}\sum\limits_{k = 1}^{j - 2} {b_{kj}^{}{\lambda _{k\,j - 1}}} }}\nonumber\\
		&&\cdot \prod\limits_{k \in [1,j - 2]}^ \triangleleft  {{{(x_k^{(n)})}^{ - b_{kj}^{}}}}\cdot f_{j - 1, - {b_{j - 1\,j}}}^{(j - 1)}x_j^{ - 1}{({X^{(j - 1)}})^{\sum\limits_{t = j + 1}^n {{b_{tj}}{\e_t}}  + {\e_{n + j}}}}.
	\end{eqnarray}

We claim that  $	f_{j - 1,1}^{(j)} \in \mathbb{ZP}[x_1^{(n)}, \dots, x_{j - 1}^{(n)},{x_j}, \dots, {x_n}].$

(i) When $b_{j-1\,j}=0$, it is easy to see that $f_{j - 1,1 }^{(j)}=0;$

(ii) When $b_{j-1\,j}\neq 0$, by Lemma \ref{519}, we have
	\begin{eqnarray*}
	f_{j - 1,1}^{(j)}	&= &{g_1}\prod\limits_{k \in [1,j - 2]}^ \triangleleft  {{{(x_k^{(n)})}^{ - b_{kj}^{}}}} \cdot \prod\limits_{k \in [j,2n]}^ \triangleleft  {x_k^{{b_{k\,j - 1}}}} \cdot \prod\limits_{k \in [1,j - 2]}^ \triangleleft  {{{(x_k^{(n)})}^{ - {b_{k\,j - 1}}}}}\\
		&&\cdot {(x_{j - 1}^{(n)})^{- {b_{j - 1\,j}} - 1}}x_j^{ - 1}{({X^{(j - 1)}})^{\sum\limits_{t = j + 1}^n {{b_{tj}}{\e_t}} + {\e_{n + j}}}}\\
		&\mathop  = \limits^{(\ref{(ij)})}& {g_{j ,j - 1,1}}x_j^{{b_{j\,j - 1}} - 1}\prod\limits_{k \in [j + 1,2n]}^ \triangleleft  {x_k^{{b_{k\,j - 1}} + {b_{kj}}}}\cdot \prod\limits_{k \in [1,j - 2]}^ \triangleleft  {{{(x_k^{(n)})}^{ - b_{kj}^{} - {b_{k\,j - 1}}}}}\\
		&&\cdot {(x_{j - 1}^{(n)})^{- {b_{j - 1\,j}} - 1}},
	\end{eqnarray*}
where $g_1$ and ${g_{j ,j - 1,1}} \in \mathbb{Z}[{q^{ \pm \frac{1}{2}}}].$ Thus the claim follows.
	
	Note that
	\begin{eqnarray*}
	& &	{q^{ - {\lambda _{j\,j - 1}}}}{x_{j - 1}}x_j^{(n)}\\
	&  = & {q^{{M_j} - {\lambda _{j\,j - 1}}}}{x_{j - 1}}\prod\limits_{k \in [1,j - 1]}^ \triangleleft  {{{(x_k^{(n)})}^{ - b_{kj}^{}}}}\cdot x_j^{ - 1}
	 {({X^{(j - 1)}})^{\sum\limits_{t = j + 1}^n {{b_{tj}}{\e_t}}  + {\e_{n + j}}}} + {q^{ - {\lambda _{j\,j - 1}}}}{x_{j - 1}}x_j^{ - 1}.
	\end{eqnarray*}
	
Therefore,  we have $x_j^{(n)}{x_{j - 1}} - {q^{ - {\lambda _{j\,j - 1}}}}{x_{j - 1}}x_j^{(n)} = f_{j - 1,1}^{(j)}.$
	
For $l\geq 2$,  we have
	\begin{eqnarray*}
		{(x_j^{(n)})^l}{x_{j - 1}}&\mathop  - & {q^{ - l{\lambda _{j\,j - 1}}}}{x_{j - 1}}{(x_j^{(n)})^l} =f_{j - 1,l}^{(j)},
	\end{eqnarray*}
where $f_{j - 1,l}^{(j)} = \sum\limits_{t = 1}^l {{q^{ - (t - 1){\lambda _{j\,j - 1}}}}{{(x_j^{(n)})}^{l - t}}f_{j - 1,1}^{(j)}} {(x_j^{(n)})^{t - 1}}.$

Thus, the proof is completed. \end{proof}

\begin{lemma}\label{5201}
 For any $j \in [1,n]$ and $ k< j,$ we have
\begin{equation}\label{(js+1)}
	{x_j^{(n)}}{x_{k}} - {q^{ - {\lambda _{jk}}}}{x_{k}}{x_j^{(n)}} =f_{k,1}^{(j)},
\end{equation}
where $f_{k,1}^{(j)}{\rm{ }} = M_{k,1}^{(j)}{({X^{(j - 1)}})^{\sum\limits_{v = j + 1}^n {{b_{vj}}{\e_v}}  + {\e_{n + j}}}}$ and

\begin{eqnarray*}
	M_{k,1}^{(j)}
	&= &\sum\limits_{t = k}^{j - 1} {\big({q^{{M_j} - {\lambda _{jk}} + {\lambda _{n + j\,k}} + \sum\limits_{v = j + 1}^n {{b_{vj}}{\lambda _{vk}}}  + \sum\limits_{r = t + 1}^{j - 1} {b_{rj}^{}{\lambda _{rk}}} }}\prod\limits_{i \in [1,t - 1]}^ \triangleleft  {{{(x_i^{(n)})}^{ - b_{ij}^{}}}} \cdot f_{k, - {b_{tj}}}^{(t)}} \\
	&&\cdot \prod\limits_{i \in [t + 1,j - 1]}^ \triangleleft  {{{(x_i^{(n)})}^{ - b_{ij}^{}}}} \cdot x_j^{ - 1}\big)  \in \mathbb{ZP}[x_1^{(n)}, \dots ,x_{j - 1}^{(n)},x_{k + 1}, \dots ,{x_n}],
\end{eqnarray*}
with $f_{k,- {b_{tj}}}^{(t)} := \sum\limits_{i = 1}^{- {b_{tj}}} {{q^{ - (i - 1){\lambda _{tk}}}}{{(x_t^{(n)})}^{- {b_{tj}} - i}}f_{k,1}^{(t)}} {(x_t^{(n)})^{i - 1}}.$			
\end{lemma}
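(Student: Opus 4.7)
The plan is to prove the lemma by induction on $j-k$, with Lemma~\ref{520} providing the base case $j-k=1$. For the inductive step I expand $x_j^{(n)}$ using~\eqref{xin1}, multiply by $x_k$ on the right, and then commute $x_k$ through the two nontrivial factors of the expansion: the product $\prod_{t \in [1,j-1]}^\triangleleft (x_t^{(n)})^{-b_{tj}}$ and the tail monomial $x_j^{-1}(X^{(j-1)})^{\sum_{v=j+1}^n b_{vj}\e_v + \e_{n+j}}$.

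The tail monomial involves only $x_j$, the $x_v$ for $v>j$, and the frozen variable $x_{n+j}$, so~\eqref{(ij)} yields a clean $q$-power $q^{-\lambda_{jk} + \lambda_{n+j\,k} + \sum_{v=j+1}^n b_{vj}\lambda_{vk}}$. Inside the product, I pass $x_k$ through each factor in order. For $t<k$ each factor contributes only a clean quasi-commutation $q$-power, since $x_t^{(n)}=x_t^{(t)}$ lies in the subalgebra of the quantum torus generated by $x_1,\ldots,x_t$ and the frozens $x_{n+1},\ldots,x_{n+t}$, and both terms of the mutation formula for $x_t^{(n)}$ give the same factor against $x_k$ because $\Lambda^{(t-1)}(\b_t^{(t-1)},\e_k)=0$ by compatibility. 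For $t=k$ the passage produces a correction governed by Lemma~\ref{519}, yielding the $t=k$ summand of $M_{k,1}^{(j)}$. For $k<t<j$ the passage is governed by the inductive hypothesis applied to the pair $(t,k)$, which has $t-k<j-k$, yielding the remaining summands through the iterated correction $f_{k,-b_{tj}}^{(t)}$.

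Collecting all of the clean $q$-factors together with the tail $q$-factor produces exactly $q^{-\lambda_{jk}}$ in front of $x_k x_j^{(n)}$; the cancellation uses $\Lambda(\b_j,\e_k)=0$ in the same way as~\eqref{(123456)}. Each correction term, once quasi-commuted past the remaining right-hand factors $(x_{t+1}^{(n)})^{-b_{t+1\,j}},\ldots,(x_{j-1}^{(n)})^{-b_{j-1\,j}}$, contributes its summand of $M_{k,1}^{(j)}$ with the stated $q$-exponent; the partial sum $\sum_{r=t+1}^{j-1} b_{rj}\lambda_{rk}$ arises precisely from those passages, and the overall monomial $(X^{(j-1)})^{\sum_{v=j+1}^n b_{vj}\e_v + \e_{n+j}}$ is carried rigidly through to the right since nothing commutes across it nontrivially after the clean factor has been extracted.

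The main obstacle will be the bookkeeping of these $q$-exponents: tracking the partial contributions from the $t<k$ clean commutations and verifying that their sum combines with the tail factor and $M_j$ into exactly $q^{-\lambda_{jk}}$ via $\Lambda(\b_j,\e_k)=0$, while simultaneously producing the precise exponent $q^{M_j-\lambda_{jk}+\lambda_{n+j\,k}+\sum_{v=j+1}^n b_{vj}\lambda_{vk}+\sum_{r=t+1}^{j-1} b_{rj}\lambda_{rk}}$ inside each correction summand. The subalgebra containment $M_{k,1}^{(j)} \in \mathbb{ZP}[x_1^{(n)},\ldots,x_{j-1}^{(n)},x_{k+1},\ldots,x_n]$ will follow simultaneously from the inductive claim that $f_{k,1}^{(t)} \in \mathbb{ZP}[x_1^{(n)},\ldots,x_{t-1}^{(n)},x_{k+1},\ldots,x_n]$ for each $t<j$, since $f_{k,-b_{tj}}^{(t)}$ is by its definition a polynomial in $x_t^{(n)}$ and $f_{k,1}^{(t)}$ and thus inherits the containment after multiplication by the surrounding factors in $M_{k,1}^{(j)}$.
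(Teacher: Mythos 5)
Your route is the same as the paper's: induction on $j-k$ with Lemma \ref{520} as base case, expansion of $x_j^{(n)}$ via \eqref{xin1}, clean quasi-commutation of $x_k$ past the tail monomial and past the factors $(x_t^{(n)})^{-b_{tj}}$ with $t<k$, Lemma \ref{519} at $t=k$, the inductive hypothesis for $k<t<j$, and collection of the clean $q$-powers into $q^{-\lambda_{jk}}$ via $\Lambda(\b_j,\e_k)=0$ as in \eqref{(123456)}. Two corrections to your bookkeeping: your claim that $x_t^{(t)}$ lies in the subalgebra generated by $x_1,\dots,x_t$ and frozen variables is false (it contains $x_r^{b_{rt}}$ for $r>t$), but this does not matter since your other justification, $\Lambda^{(t-1)}(\b_t^{(t-1)},\e_k)=0$ from \eqref{(dj)} for the mutated seed, is the correct and sufficient one; and since $x_k$ enters from the right, each correction $f^{(t)}_{k,-b_{tj}}$ is created with the already-crossed factors $(x_i^{(n)})^{-b_{ij}}$, $i\in[t+1,j-1]$, sitting to its right exactly as in the stated formula for $M^{(j)}_{k,1}$ --- the exponent $\sum_{r=t+1}^{j-1}b_{rj}\lambda_{rk}$ comes from the clean crossings of $x_k$ itself, and the correction must \emph{not} be commuted further if you are to land on the stated expression.

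The genuine gap is in your final paragraph, the containment $M^{(j)}_{k,1}\in\mathbb{ZP}[x_1^{(n)},\dots,x_{j-1}^{(n)},x_{k+1},\dots,x_n]$. Every summand of $M^{(j)}_{k,1}$ ends with an explicit factor $x_j^{-1}$, which is not an element of this polynomial ring, so the membership cannot be \emph{inherited} merely from $f^{(t)}_{k,1}\in\mathbb{ZP}[x_1^{(n)},\dots,x_{t-1}^{(n)},x_{k+1},\dots,x_n]$ ``after multiplication by the surrounding factors''; one of those surrounding factors is precisely the problem. The paper spends the second half of its proof on this point: using the inductive form $f^{(t)}_{k,1}=M^{(t)}_{k,1}\,(X^{(t-1)})^{\sum_{v=t+1}^{n}b_{vt}\e_v+\e_{n+t}}$, it extracts from the tail the factor $x_j^{b_{jt}}$, so that $f^{(t)}_{k,-b_{tj}}$ can be written as $L_t\,x_j^{b_{jt}}$ with $L_t$ in the allowed ring; a summand is nonzero only when $b_{tj}\neq 0$, in which case $b_{jt}\geq 1$ by the ordering convention and skew-symmetrizability, so after quasi-commuting $x_j^{b_{jt}}$ to the right it cancels $x_j^{-1}$ and leaves the non-negative power $x_j^{b_{jt}-1}$ (while for $b_{tj}=0$ the summand vanishes since $f^{(t)}_{k,0}=0$). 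Without this cancellation argument the stated containment --- which is exactly what is needed later (Lemma \ref{prove}, Proposition \ref{span}) --- is not established, so your proposal is incomplete at this step, though it is repairable by the paper's argument.
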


\begin{proof}
We prove it  by induction. When $k=j-1,$ according to Lemma \ref{520}, we have
$${x_j^{(n)}}{x_{j - 1}} - {q^{ - {\lambda _{j\,j - 1}}}}{x_{j - 1}}{x_j^{(n)}} =f_{j - 1,1}^{(j)}.$$
Note that, following the proof of Lemma \ref{520}, we can rewrite $f_{j - 1,1}^{(j)}$ as
$$f_{j - 1,1}^{(j)} = M_{j - 1,1}^{(j)}{({X^{(j - 1)}})^{\sum\limits_{v = j + 1}^n {{b_{vj}}{\e_v}}  + {\e_{n + j}}}},$$ where
\begin{eqnarray*}
	M_{j - 1,1}^{(j)} &=& {q^{{M_j} - {\lambda _{j\,j - 1}}{\rm{ - }}\sum\limits_{v = 1}^{j - 2} {b_{vj}^{}{\lambda _{v\,j - 1}}} }}\prod\limits_{i \in [1,j - 2]}^ \triangleleft  {{{(x_i^{(n)})}^{ - b_{ij}^{}}}} \cdot f_{j - 1, - {b_{j - 1\,j}}}^{(j - 1)}x_j^{ - 1} \\
&	\mathop  = \limits^{(\ref{(123456)})}&{q^{{M_j} - {\lambda _{j\,j - 1}} + {\lambda _{n + j\,j - 1}} + \sum\limits_{v = j + 1}^n {{b_{vj}}{\lambda _{v\,j - 1}}} }}\prod\limits_{i \in [1,j - 2]}^ \triangleleft  {{{(x_i^{(n)})}^{ - b_{ij}^{}}}} \cdot f_{j - 1, - {b_{j - 1\,j}}}^{(j - 1)}x_j^{ - 1}
\end{eqnarray*}
which is in $\mathbb{ZP}[x_1^{(n)}, \dots, x_{j - 1}^{(n)},{x_j}, \dots,{x_n}].$

Suppose that for any $j \in [1,n]$ and $ k\in [s+1,j-1],$ the statement (\ref{(js+1)}) is true.
Thus we have
\begin{eqnarray}\label{(jls+1)}
		{(x_j^{(n)})^l}{x_k}&  = & {q^{ - l{\lambda _{jk}}}}{x_k}{(x_j^{(n)})^l} + f_{k,l}^{(j)},
	\end{eqnarray}	
where
\begin{equation}\label{(qqq)}
	f_{k,l}^{(j)} = \sum\limits_{t = 1}^l {{q^{ - (t - 1){\lambda _{jk}}}}{{(x_j^{(n)})}^{l - t}}f_{k,1}^{(j)}} {(x_j^{(n)})^{t - 1}} .
\end{equation}	
	
When  $k=s$, we have
\begin{eqnarray*}
	x_j^{(n)}{x_s}&=&{q^{{M_j}}}\prod\limits_{i \in [1,j - 2]}^ \triangleleft  {{{(x_i^{(n)})}^{ - b_{ij}^{}}}} \cdot  {(x_{j - 1}^{(n)})^{ - b_{j - 1\,j}^{}}}x_j^{ - 1}{({X^{(j - 1)}})^{\sum\limits_{t = j + 1}^n {{b_{tj}}{\e_t}}  + {\e_{n + j}}}}{x_s} + x_j^{ - 1}{x_s}\\
	&	\mathop  = \limits^{(\ref{(ij)})}& {q^{{M_j} - {\lambda _{js}} + {\lambda _{n + j\, s}} + \sum\limits_{v = j + 1}^n {{b_{vj}}{\lambda _{vs}}} }}\prod\limits_{i\in [1,j - 2]}^ \triangleleft  {{{(x_i^{(n)})}^{ - b_{ij}^{}}}} \cdot {(x_{j - 1}^{(n)})^{ - b_{j - 1\,j}^{}}}{x_s}\\
	&&\cdot x_j^{ - 1}{({X^{(j - 1)}})^{\sum\limits_{t = j + 1}^n {{b_{tj}}{\e_t}}  + {\e_{n + j}}}} + {q^{ - {\lambda _{js}}}}{x_s}x_j^{ - 1}\\	
	&	\mathop  = \limits^{(\ref{(jls+1)})} &{q^{{M_j} - {\lambda _{js}} + {\lambda _{n + j\,s}} + \sum\limits_{v = j + 1}^n {{b_{vj}}{\lambda _{vs}}} }}\prod\limits_{i\in [1,j - 2]}^ \triangleleft  {{{(x_i^{(n)})}^{ - b_{ij}^{}}}} \\
	&&\cdot \big( {{q^{b_{j - 1\,j}^{}{\lambda _{j - 1\,s}}}}{x_s}{{(x_{j - 1}^{(n)})}^{ - b_{j - 1\,j}^{}}} + f_{s, - {b_{j - 1\,j}}}^{(j - 1)}} \big)x_j^{ - 1}{({X^{(j - 1)}})^{\sum\limits_{t = j + 1}^n {{b_{tj}}{\e_t}}  + {\e_{n + j}}}}\\
	&&+ {q^{ - {\lambda _{js}}}}{x_s}x_j^{ - 1}\\
	&=& {q^{{M_j} - {\lambda _{js}} + {\lambda _{n + j\,s}} + \sum\limits_{v = j + 1}^n {{b_{vj}}{\lambda _{vs}}}  + b_{j - 1\,j}^{}{\lambda _{j - 1\,s}}}}\prod\limits_{i \in [1,j - 2]}^ \triangleleft  {{{(x_i^{(n)})}^{ - b_{ij}^{}}}} \cdot {x_s}{(x_{j - 1}^{(n)})^{ - b_{j - 1\,j}^{}}}\\
	&&\cdot x_j^{ - 1}{({X^{(j - 1)}})^{\sum\limits_{t = j + 1}^n {{b_{tj}}{\e_t}}  + {\e_{n + j}}}} + {q^{ - {\lambda _{js}}}}\cdot {x_s}x_j^{ - 1} + F_{j - 1,s}^{(j)}\\
\end{eqnarray*}

\begin{eqnarray*}
	&\mathop  = \limits^{(\ref{(jls+1)})}& {q^{{M_j} - {\lambda _{js}} + {\lambda _{n + j\,s}} + \sum\limits_{v = j + 1}^n {{b_{vj}}{\lambda _{vs}}}  + \sum\limits_{t = s + 1}^{j - 1} {b_{tj}^{}{\lambda _{ts}}} }}\prod\limits_{i \in [1,s - 1]}^ \triangleleft  {{{(x_i^{(n)})}^{ - b_{ij}^{}}}} \cdot {x_s} \cdot \prod\limits_{i \in [s,j - 1]}^ \triangleleft  {{{(x_i^{(n)})}^{ - b_{ij}^{}}}} \\
	&&\cdot x_j^{ - 1}{({X^{(j - 1)}})^{\sum\limits_{t = j + 1}^n {{b_{tj}}{\e_t}}  + {\e_{n + j}}}} + {q^{ - {\lambda _{js}}}}{x_s}x_j^{ - 1} + \sum\limits_{t = s}^{j - 1} {F_{t,s}^{(j)}} \\
	&	\mathop  = \limits^{(\ref{(ij)})}& {q^{{M_j} - {\lambda _{js}} + {\lambda _{n + j\,s}} + \sum\limits_{v = j + 1}^n {{b_{vj}}{\lambda _{vs}}}  + \sum\limits_{t = 1}^{j - 1} {b_{tj}^{}{\lambda _{ts}}} }}{x_s}\prod\limits_{i\in [1,j - 1]}^ \triangleleft  {{{(x_i^{(n)})}^{ - b_{ij}^{}}}} \\
	&&\cdot x_j^{ - 1}{({X^{(j - 1)}})^{\sum\limits_{t = j + 1}^n {{b_{tj}}{\e_t}}  + {\e_{n + j}}}} + {q^{ - {\lambda _{js}}}}{x_s}x_j^{ - 1} +f_{s,1}^{(j)} \\
	&	\mathop  = \limits^{(\ref{(dj)})}&{q^{{M_j} - {\lambda _{js}}}}{x_s}\prod\limits_{i \in [1,j - 1]}^ \triangleleft  {{{(x_i^{(n)})}^{ - b_{ij}^{}}}} \cdot x_j^{ - 1}{({X^{(j - 1)}})^{\sum\limits_{t = j + 1}^n {{b_{tj}}{\e_t}}  + {\e_{n + j}}}} + {q^{ - {\lambda _{js}}}}{x_s}x_j^{ - 1} + f_{s,1}^{(j)},
\end{eqnarray*}
where
\begin{eqnarray*}
	f_{s,1}^{(j)} = \sum\limits_{t = s}^{j - 1} {F_{t,s}^{(j)}} &=&\sum\limits_{t = s}^{j - 1} {\big({q^{{M_j} - {\lambda _{js}} + {\lambda _{n + j\,s}} + \sum\limits_{v = j + 1}^n {{b_{vj}}{\lambda _{vs}}}  + \sum\limits_{r = t + 1}^{j - 1} {b_{rj}^{}{\lambda _{rs}}} }}\prod\limits_{i \in [1,t - 1]}^ \triangleleft  {{{(x_i^{(n)})}^{ - b_{ij}^{}}}} \cdot f_{s, - {b_{tj}}}^{(t)}} \nonumber\\
	&&	\cdot \prod\limits_{i\in [t + 1,j - 1]}^ \triangleleft  {{{(x_i^{(n)})}^{ - b_{ij}^{}}}} \cdot x_j^{ - 1}{({X^{(j - 1)}})^{\sum\limits_{v = j + 1}^n {{b_{vj}}{\e_v}}  + {\e_{n + j}}}}\big)\nonumber\\
	&=&M_{s,1}^{(j)}{({X^{(j - 1)}})^{\sum\limits_{v = j + 1}^n {{b_{vj}}{\e_v}}  + {\e_{n + j}}}},
\end{eqnarray*}	
with
\begin{eqnarray}\label{(888)}
	M_{s,1}^{(j)} &:=& \sum\limits_{t = s}^{j - 1} {\big({q^{{M_j} - {\lambda _{js}} + {\lambda _{n + j\,s}} + \sum\limits_{v = j + 1}^n {{b_{vj}}{\lambda _{vs}}}  + \sum\limits_{r = t + 1}^{j - 1} {b_{rj}^{}{\lambda _{rs}}} }}\prod\limits_{i \in [1,t - 1]}^ \triangleleft  {{{(x_i^{(n)})}^{ - b_{ij}^{}}}} \cdot f_{s, - {b_{tj}}}^{(t)}} \nonumber\\
	&&\cdot \prod\limits_{i \in [t + 1,j - 1]}^ \triangleleft  {{{(x_i^{(n)})}^{ - b_{ij}^{}}}}\cdot x_j^{ - 1}\big).
\end{eqnarray}

We   need to prove that  $	M_{s,1}^{(j)}  \in \mathbb{ZP}[x_1^{(n)},  \dots,x_{j - 1}^{(n)},x_{s + 1}, \dots ,{x_n}].$

Note that, if  $b_{tj}=0$ for some $t \in [s,j-1],$ then $f_{s, - {b_{tj}}}^{(t)}=0$. Thus, if  $b_{tj}=0$ for all $t \in [s,j-1],$ then $M_{s,1}^{(j)}=0.$
Therefore we only need to consider those  $t \in [s,j-1]$ such that $b_{tj}\ne 0.$   By  induction, we have
$$f_{s,1}^{(t)} = M_{s,1}^{(t)}{({X^{(t - 1)}})^{\sum\limits_{v = t + 1}^n {{b_{vt}}{\e_v}}  + {\e_{n + t}}}} = {q^ \odot }M_{s,1}^{(t)}{({X^{(t - 1)}})^{\sum\limits_{v = t + 1,v \ne j}^n {{b_{vt}}{\e_v}}  + {\e_{n + t}}}}x_j^{{b_{jt}}},$$
where $	M_{s,1}^{(t)}  \in \mathbb{ZP}[x_1^{(n)}, \dots  ,x_{t - 1}^{(n)},x_{s + 1}, \dots ,{x_n}]$ and $\odot \in \frac{\mathbb{Z}}{2}.$
Then, according to formula (\ref{(qqq)}), we have
\begin{eqnarray*}
	f_{s, - {b_{tj}}}^{(t)} &=& \sum\limits_{v = 1}^{ - {b_{tj}}} {{q^{ - (v - 1){\lambda _{ts}}}}{{(x_t^{(n)})}^{ - {b_{tj}} - v}}f_{s,1}^{(t)}} {(x_t^{(n)})^{v - 1}}\\
	&	=& \sum\limits_{v = 1}^{ - {b_{tj}}} {{q^{ - (v - 1){\lambda _{ts}} +  \odot }}{{(x_t^{(n)})}^{ - {b_{tj}} - v}}M_{s,1}^{(t)}{{({X^{(t - 1)}})}^{\sum\limits_{i = t + 1,i \ne j}^n {{b_{it}}{\e_i}}  + {\e_{n + t}}}}x_j^{{b_{jt}}}} {(x_t^{(n)})^{v - 1}}\\
	&	=& L_tx_j^{{b_{jt}}},
\end{eqnarray*}
where $L_t  \in \mathbb{ZP}[x_1^{(n)}, \dots , x_{t }^{(n)}, x_{s+1}^{}, \dots ,{x_n}].$

Thus, we get
\begin{eqnarray*}
&&	{{q^{{M_j} - {\lambda _{js}} + {\lambda _{n + j\,s}} + \sum\limits_{v = j + 1}^n {{b_{vj}}{\lambda _{vs}}}  + \sum\limits_{r = t + 1}^{j - 1} {b_{rj}^{}{\lambda _{rs}}} }}\prod\limits_{i \in [1,t - 1]}^ \triangleleft  {{{(x_i^{(n)})}^{ - b_{ij}^{}}}} \cdot
f_{s, - {b_{tj}}}^{(t)}}\cdot \prod\limits_{i \in [t + 1,j - 1]}^ \triangleleft  {{{(x_i^{(n)})}^{ - b_{ij}^{}}}}\cdot  x_j^{ - 1}\\
	&=&  {{q^{{M_j} - {\lambda _{js}} + {\lambda _{n + j\,s}} + \sum\limits_{v= j + 1}^n {{b_{vj}}{\lambda _{vs}}}  + \sum\limits_{r = t + 1}^{j - 1} {b_{rj}^{}{\lambda _{rs}}} }}\prod\limits_{k \in [1,t - 1]}^ \triangleleft  {{{(x_k^{(n)})}^{ - b_{kj}^{}}}} \cdot L_tx_j^{{b_{jt}}}} \cdot \prod\limits_{k \in [t + 1,j - 1]}^ \triangleleft  {{{(x_k^{(n)})}^{ - b_{kj}^{}}}} \cdot x_j^{ - 1}\\
	&	=& {q^{\star}\prod\limits_{k \in [1,t - 1]}^ \triangleleft  {{{(x_k^{(n)})}^{ - b_{kj}^{}}}} \cdot L_t} \cdot \prod\limits_{k \in [t + 1,j - 1]}^ \triangleleft  {{{(x_k^{(n)})}^{ - b_{kj}^{}}}}
	\cdot x_j^{{b_{jt}} - 1},
\end{eqnarray*}
where $\star \in \frac{\mathbb{Z}}{2}.$
It follows that  $	M_{s,1}^{(j)} \in \mathbb{ZP}[x_1^{(n)}, \dots ,x_{j - 1}^{(n)},x_{s + 1}, \dots ,{x_n}].$

Note that
\begin{eqnarray*}
	{q^{ - {\lambda _{js}}}}{x_s}x_j^{(n)} &= &{q^{{M_j} - {\lambda _{js}}}}{x_s}\cdot \prod\limits_{k \in [1,j - 1]}^ \triangleleft  {{{(x_k^{(n)})}^{ - b_{kj}^{}}}} \cdot x_j^{ - 1}{({X^{(j - 1)}})^{\sum\limits_{t = j + 1}^n {{b_{tj}}{\e_t}} + {\e_{n + j}}}}
		+ {q^{ - {\lambda _{js}}}}{x_s}x_j^{ - 1}.
\end{eqnarray*}	

Hence, we obtain  $x_j^{(n)}{x_s} - {q^{ - {\lambda _{js}}}}{x_s}x_j^{(n)} = f_{s,1}^{(j)}.$ \end{proof}

By Lemma \ref{5201}, we have the following result. 	
\begin{corollary}\label{use}
 For any $j \in [1,n]$ and $ k< j,$ we have
$$f_{k,1}^{(j)}{\rm{ }} \in \mathbb{ZP}[x_1^{(n)}, \dots ,x_{j - 1}^{(n)},x_{k + 1}, \dots ,{x_n}].$$
\end{corollary}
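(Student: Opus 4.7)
The plan is to read the conclusion directly off the explicit factorization of $f_{k,1}^{(j)}$ already produced inside the proof of Lemma \ref{5201}. There we obtained
\[
f_{k,1}^{(j)} = M_{k,1}^{(j)} \cdot (X^{(j-1)})^{\sum_{v=j+1}^{n} b_{vj}\e_v + \e_{n+j}},
\]
and the statement of Lemma \ref{5201} already records that $M_{k,1}^{(j)} \in \mathbb{ZP}[x_1^{(n)},\dots,x_{j-1}^{(n)},x_{k+1},\dots,x_n]$. Thus the corollary reduces to showing that the remaining monomial factor lies in the same subalgebra.

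To handle that factor, I would exploit the fact that $\Sigma^{(j-1)}$ is obtained from $\Sigma$ by mutating only in directions $1,2,\ldots,j-1$. Consequently $x_v^{(j-1)} = x_v$ for every index $v$ in $[j+1, n]$ and in $[n+1, 2n]$; in particular the support $\{j+1,\ldots,n\}\cup\{n+j\}$ of the exponent vector consists entirely of untouched indices. Unwinding using \eqref{(ef)} gives
\[
(X^{(j-1)})^{\sum_{v=j+1}^{n} b_{vj}\e_v + \e_{n+j}} = q^{\bullet} \prod_{v \in [j+1,n]}^{\triangleleft} x_v^{b_{vj}} \cdot x_{n+j}
\]
for some $\bullet \in \tfrac{1}{2}\mathbb{Z}$. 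The acyclic ordering fixed at the start of Section 3 ensures $b_{vj} \ge 0$ for all $v>j$, so every exponent on the right-hand side is nonnegative, and $x_{n+j}\in\mathbb{ZP}$. Therefore this factor already lies in $\mathbb{ZP}[x_{j+1},\ldots,x_n] \subseteq \mathbb{ZP}[x_{k+1},\ldots,x_n]$ since $k<j$.

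The final step is to combine the two pieces. A product of two elements of $\mathbb{ZP}[x_1^{(n)},\dots,x_{j-1}^{(n)},x_{k+1},\dots,x_n]$ remains in this subalgebra because the quasi-commutation relations in $\mathcal{T}$ only contribute additional scalars from $\mathbb{ZP}$. Putting the two factors together therefore yields $f_{k,1}^{(j)}\in \mathbb{ZP}[x_1^{(n)},\dots,x_{j-1}^{(n)},x_{k+1},\dots,x_n]$, as claimed. There is no serious obstacle here; the entire argument is a bookkeeping exercise, and the one point to be vigilant about is that the acyclic sign convention on $B$ is genuinely used to guarantee no inverse of an $x_v$ with $v\in[k+1,n]$ sneaks into the second factor—which would otherwise force us outside the target polynomial subalgebra.
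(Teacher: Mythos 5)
Your proof is correct and takes essentially the same route as the paper, which states Corollary \ref{use} as an immediate consequence of Lemma \ref{5201}: there $f_{k,1}^{(j)}=M_{k,1}^{(j)}\,(X^{(j-1)})^{\sum_{v=j+1}^{n}b_{vj}\e_v+\e_{n+j}}$ with $M_{k,1}^{(j)}$ already shown to lie in $\mathbb{ZP}[x_1^{(n)},\dots,x_{j-1}^{(n)},x_{k+1},\dots,x_n]$. Your additional check that the remaining factor is, up to a power of $q^{1/2}$, $\prod_{v\in[j+1,n]}^{\triangleleft}x_v^{b_{vj}}\cdot x_{n+j}$ with $b_{vj}\ge 0$ (from the acyclic ordering) and $x_{n+j}\in\mathbb{ZP}$ is precisely the routine detail the paper leaves implicit.
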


\begin{remark}
Lemma \ref{5201} combining with Corollary \ref{use} is in the same spirit as in \cite{LS} which is described as Levendorski$\breve{{\i}}$-So$\breve{{\i}}$belman straightening law.
\end{remark}

\begin{lemma}\label{prove}
For any $j,k \in[1,n]$, $k<j$ and $l\in \mathbb{N}$,  we have
\begin{equation}\label{(jls')}
x_j^{(n)}x_k^l -{q^{ - l{\lambda _{jk}}}}x_k^lx_j^{(n)} =g_{k,l}^{(j)},
\end{equation}
where $g_{k,l}^{(j)} \in \mathbb{ZP}[x_1^{(n)}, \dots  ,x_{j - 1}^{(n)},x_{k}, \dots ,{x_n}].$
\end{lemma}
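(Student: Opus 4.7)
The plan is to prove the identity by induction on $l$, using Lemma~\ref{5201} and Corollary~\ref{use} as the base case, and using the quasi-commutation relation on the first factor to peel off one copy of $x_k$ at a time.

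The base case $l=1$ is exactly Lemma~\ref{5201}, which says
\[
x_j^{(n)} x_k - q^{-\lambda_{jk}} x_k x_j^{(n)} = f_{k,1}^{(j)},
\]
and Corollary~\ref{use} tells us that $f_{k,1}^{(j)} \in \mathbb{ZP}[x_1^{(n)},\dots,x_{j-1}^{(n)}, x_{k+1},\dots,x_n] \subseteq \mathbb{ZP}[x_1^{(n)},\dots,x_{j-1}^{(n)}, x_k,\dots,x_n]$. So we may set $g_{k,1}^{(j)} := f_{k,1}^{(j)}$.

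For the inductive step, assume the statement holds for $l-1$. I would write
\[
x_j^{(n)} x_k^l = (x_j^{(n)} x_k)\,x_k^{l-1} = \bigl(q^{-\lambda_{jk}} x_k x_j^{(n)} + f_{k,1}^{(j)}\bigr)\, x_k^{l-1},
\]
then apply the induction hypothesis to $x_j^{(n)} x_k^{l-1}$ inside the first summand. This yields
\[
x_j^{(n)} x_k^l = q^{-l\lambda_{jk}}\, x_k^l x_j^{(n)} + q^{-\lambda_{jk}} x_k\, g_{k,l-1}^{(j)} + f_{k,1}^{(j)}\, x_k^{l-1},
\]
so we can take
\[
g_{k,l}^{(j)} := q^{-\lambda_{jk}} x_k\, g_{k,l-1}^{(j)} + f_{k,1}^{(j)}\, x_k^{l-1}.
\]

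It remains to verify that $g_{k,l}^{(j)}$ lies in $\mathbb{ZP}[x_1^{(n)},\dots,x_{j-1}^{(n)}, x_k,\dots,x_n]$. The first summand is in this subalgebra by the induction hypothesis together with the fact that the subalgebra is closed under left multiplication by $x_k$. The second summand is in this subalgebra because $f_{k,1}^{(j)} \in \mathbb{ZP}[x_1^{(n)},\dots,x_{j-1}^{(n)}, x_{k+1},\dots,x_n]$ by Corollary~\ref{use}, and multiplying on the right by $x_k^{l-1}$ keeps us inside $\mathbb{ZP}[x_1^{(n)},\dots,x_{j-1}^{(n)}, x_k,\dots,x_n]$. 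I do not expect any real obstacle here; the only subtle point is that when forming products the quasi-commutation relations \eqref{(ij)} introduce powers of $q^{\pm 1/2}$, but these are absorbed into $\mathbb{ZP}$ and do not affect the support of the resulting element in the generators. Hence the induction closes and the lemma follows.
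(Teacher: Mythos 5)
Your proof is correct and follows essentially the same route as the paper: the paper iterates the $l=1$ relation of Lemma~\ref{5201} to write $g_{k,l}^{(j)} = \sum_{t=1}^l q^{-(t-1)\lambda_{jk}} x_k^{t-1} f_{k,1}^{(j)} x_k^{l-t}$ explicitly, which is exactly what your recursion $g_{k,l}^{(j)} = q^{-\lambda_{jk}} x_k\, g_{k,l-1}^{(j)} + f_{k,1}^{(j)} x_k^{l-1}$ unrolls to, and the membership in $\mathbb{ZP}[x_1^{(n)},\dots,x_{j-1}^{(n)},x_k,\dots,x_n]$ follows in both cases from Corollary~\ref{use} together with closure of that subalgebra under multiplication by $x_k$.
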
	

\begin{proof}
According to Lemma \ref{5201}, we have
\begin{equation*}
	x_j^{(n)}x_k^l - {q^{ - l{\lambda _{jk}}}}x_k^lx_j^{(n)} = g_{k,l}^{(j)},
\end{equation*}	
where $g_{k,l}^{(j)} = \sum\limits_{t = 1}^l {{q^{ - (t - 1){\lambda _{jk}}}}x_k^{t-1}f_{k,1}^{(j)}} x_k^{l-t}.$

Thus, by Corollary \ref{use}, we obtain $g_{k,l}^{(j)} \in  \mathbb{ZP}[x_1^{(n)}, \dots  ,x_{j - 1}^{(n)},x_{k}, \dots ,{x_n}].$\end{proof}

\begin{definition}
 	A  monomial $x_1^{{a_1}} \cdots x_n^{{a_n}}{(x_1^{(n)})^{a_1^{(n)}}} \cdots {(x_n^{(n)})^{a_n^{(n)}}}$  is called  a projective standard  if  all exponents are non-negative integers and ${a_k}a_k^{(n)} = 0$ for all $ k \in [1, n]$.
\end{definition}

\begin{proposition}\label{span}
	For any  $j,k \in[1,n], a_k,{a_k^{(n)}} \in \mathbb{N}$ such that $a_k{a_k^{(n)}} =0$, the following items are all $\mathbb{ZP}$-linear combinations of  projective standard  monomials:
\begin{enumerate}
\item $x_j^{(n)}\prod\limits_{k \in [1,n]}^ \triangleleft  {x_k^{{a_k}}}\cdot \prod\limits_{k \in [1,n]}^ \triangleleft  {{{(x_k^{(n)})}^{a_k^{(n)}}}}$;
\item $\prod\limits_{k \in [1,n]}^ \triangleleft  {x_k^{{a_k}}}\cdot \prod\limits_{k \in [1,n]}^ \triangleleft  {{{(x_k^{(n)})}^{a_k^{(n)}}}}\cdot x_j^{(n)}$;
\item ${x_j}\prod\limits_{k \in [1,n]}^ \triangleleft  {x_k^{{a_k}}}\cdot \prod\limits_{k \in [1,n]}^ \triangleleft  {{{(x_k^{(n)})}^{a_k^{(n)}}}} $;
\item $ \prod\limits_{k \in [1,n]}^ \triangleleft  {x_k^{{a_k}}}\cdot \prod\limits_{k \in [1,n]}^ \triangleleft  {{{(x_k^{(n)})}^{a_k^{(n)}}}}\cdot {x_j}.$
\end{enumerate}
\end{proposition}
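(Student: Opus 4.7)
The plan is to prove items (1)--(4) together by strong induction on a lex-ordered invariant. For a word $W$ in the generators $\{x_1,\dots,x_n,x_1^{(n)},\dots,x_n^{(n)},x_{n+1},\dots,x_{2n}\}$, let $e_j(W)$ be the total number of occurrences of $x_j^{(n)}$ in $W$, and set $\mathrm{wt}(W):=(e_n(W),e_{n-1}(W),\dots,e_1(W))$, ordered lexicographically with the highest-index coordinate most significant. A word with $\mathrm{wt}(W)=\mathbf 0$ involves no projective cluster variable, hence by the torus quasi-commutation $x_ix_j=q^{\lambda_{ij}}x_jx_i$ it is already a $q$-power times a standard monomial; this is the base case.

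The main ingredients are the rewriting moves read off from the preceding lemmas. Whenever two variables lie in a common seed $\Sigma^{(i)}$ they quasi-commute in $\mathcal T$ without remainder, and reading $\Lambda^{(i)}$ from the excerpt yields the weight-preserving clean moves
\[x_i^{(n)}x_j=q^{-\lambda_{ij}}x_jx_i^{(n)}\ (i<j),\qquad x_j^{(n)}x_k=q^{-\lambda_{jk}}x_kx_j^{(n)}\ (k>j),\]
(the first from $\Sigma^{(i)}$, the second from $\Sigma^{(j)}$), together with mutual quasi-commutation among the $x_i^{(n)}$ (all in $\Sigma^{(n)}$). The \emph{weight-decreasing} moves are Lemma \ref{5201} with Corollary \ref{use}, which turns $x_j^{(n)}x_k$ for $k<j$ into $q^{-\lambda_{jk}}x_kx_j^{(n)}+f_{k,1}^{(j)}$ with the residue in $\mathbb{ZP}[x_1^{(n)},\dots,x_{j-1}^{(n)},x_{k+1},\dots,x_n]$ and containing no $x_j^{(n)}$; and the diagonal identity
\[x_j^{(n)}x_j=q^{d_j/2}P_j+1,\qquad P_j:=(X^{(j-1)})^{\b_j^{(j-1)}},\]
extracted directly from the proof of Lemma \ref{519}. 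Under the acyclic convention $b_{ij}\le 0$ for $i<j$, the monomial $P_j$ is a non-negative product drawn from $\{x_i^{(n)}:i<j\}\cup\{x_i:j<i\le n\}\cup\{x_{n+j}\}$ and contains neither $x_j$ nor $x_j^{(n)}$. The mirror identity $x_jx_j^{(n)}=q^{-d_j/2}P_j+1$ follows from the same computation, and $x_jx_k^{(n)}$ for $k>j$ is obtained by solving Lemma \ref{5201} (with indices $(k,j)$) for $x_jx_k^{(n)}$.

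For each item I walk the extra factor toward its standard position via the moves above. In item (1), $x_j^{(n)}$ travels rightward through $\prod_k^{\triangleleft}x_k^{a_k}$: the factors with $k>j$ pass cleanly, those with $k<j$ pass via Lemma \ref{5201} (each generating a residue of strictly smaller weight, killed by the induction hypothesis), and at $k=j$ a single application of the diagonal identity replaces $x_j^{(n)}x_j^{a_j}$ by $q^{d_j/2}P_j\cdot x_j^{a_j-1}+x_j^{a_j-1}$, both summands being free of $x_j^{(n)}$; here I use that every factor of $P_j$ lies in the common seed $\Sigma^{(j-1)}$ as $x_j$, so that $P_j$ quasi-commutes with $x_j^{a_j-1}$ up to a single $q$-factor. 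When $a_j=0$ the surviving $x_j^{(n)}$ slides through $\prod_{i<j}^{\triangleleft}(x_i^{(n)})^{a_i^{(n)}}$ cleanly and merges into the slot $(x_j^{(n)})^{a_j^{(n)}+1}$; when $a_j>0$ the hypothesis forces $a_j^{(n)}=0$ and no merging is needed. Item (2) is proved symmetrically by walking $x_j^{(n)}$ leftward. For items (3) and (4), $x_j$ slides through the $x$-part by torus quasi-commutation and through $(x_i^{(n)})^{a_i^{(n)}}$ for $i<j$ by the first clean relation above; the diagonal overlap $x_j(x_j^{(n)})^{a_j^{(n)}}$ (only occurring when $a_j=0$ and $a_j^{(n)}>0$) is resolved by a single application of the mirror diagonal identity, and the crossings $x_jx_k^{(n)}$ for $k>j$ produce residues $f_{j,1}^{(k)}$ (confined by Corollary \ref{use}) that strictly lower $e_k$.

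The main obstacle is bookkeeping: each residue thrown off by Lemma \ref{5201} or by a $P_j$-substitution is not itself a projective standard monomial, and after the substitution several of the constraints $a_ka_k^{(n)}=0$ may be freshly violated (for example $P_j$ injects new $x_i$-factors for $i>j$ which may collide with the existing $(x_i^{(n)})^{a_i^{(n)}}$). For the induction to close one has to verify for each rewrite that (a) the residue lies in the subalgebra generated by the expected variables --- exactly Corollary \ref{use} in the first case, and the explicit form of $P_j$ in the second --- and (b) the residue has $\mathrm{wt}$ strictly smaller than the expression being rewritten. Granting these two verifications for each move, the induction terminates and yields the four claims of the proposition.
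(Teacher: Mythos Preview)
Your approach is essentially correct but organized differently from the paper's. The paper first disposes of (2) and (3) by iterated use of the diagonal exchange relation $x_k x_k^{(n)} = q^{\bullet_k} P_k + 1$, then proves (1) by induction on the index $j$, invoking the already-established (3) at every step to absorb the stray $x_l$-factors coming from $P_j$ and from the residues $g_{t,a_t}^{(i)}$; (4) is then read off from (1) and (3). You instead run a single strong induction on the lex-ordered weight vector $(e_n,\dots,e_1)$, treating all four items uniformly with the same three moves (clean quasi-commutation, Lemma~\ref{5201}, and the diagonal identity). This is a legitimate and in some ways cleaner packaging, and your identification of the clean moves via the matrices $\Lambda^{(i)}$ is correct.

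There is, however, one gap worth closing. Your residues---whether from Lemma~\ref{5201} or from a $P_j$-substitution---are general words, not of the shape ``one extra generator times a projective standard monomial'', so the hypothesis you actually stated (items (1)--(4) at smaller weight) does not directly apply to them. You flag this in the last paragraph as bookkeeping, but verifying ``(a) the residue lies in the expected subalgebra'' and ``(b) the weight strictly drops'' is not by itself enough to invoke the induction as written. The fix is routine: either upgrade the induction hypothesis to the full spanning claim (``every word of smaller weight is a $\mathbb{ZP}$-combination of projective standard monomials''), which does follow from (1)--(4) at smaller weights by an inner induction on word length once you observe that every rewrite is weight-nonincreasing on its leading term; or introduce a secondary termination measure (e.g.\ the number of inversions relative to the PSM order $x_1<\dots<x_n<x_1^{(n)}<\dots<x_n^{(n)}$) within each weight level. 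The paper's organization sidesteps this entirely by having (3) available as a black box before the induction on $j$ begins, so that each residue can be straightened by first absorbing all $x_l$'s via (3) and then absorbing the remaining $x_k^{(n)}$'s (all with $k<j$) via the induction hypothesis on $j$.
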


\begin{proof}
	Note that (2) and (3) can be  proved repeatedly by using the following exchange relation
\begin{equation*}
{x_k}x_k^{(n)}={q^{{ \bullet _k}}}\prod\limits_{j \in [k + 1,2n]}^ \triangleleft  {x_j^{{b_{jk}}}}  \cdot \prod\limits_{j \in [1,k - 1]}^ \triangleleft  {{{(x_j^{(k - 1)})}^{ - {b_{jk}}}}}  + 1,
\end{equation*}
where  $\bullet_k \in \frac{\mathbb{Z}}{2}.$

We  prove (1) by induction, and the proof of (4) follows from (1) and (3).

When $j=1$ and $a_1=0,$  the proof follows from the  equation
$$x_1^{(n)}\prod\limits_{k \in [1,n]}^ \triangleleft  {x_k^{{a_k}}} \cdot \prod\limits_{k \in [1,2n]}^ \triangleleft  {{{(x_k^{(n)})}^{a_k^{(n)}}}}
=q^{{\star}}\prod\limits_{k \in [1,n]}^ \triangleleft  {x_k^{{a_k}}} \cdot {{{(x_1^{(n)})}^{a_1^{(n)}+1}}}\cdot\prod\limits_{k \in [2,2n]}^ \triangleleft  {{{(x_k^{(n)})}^{a_k^{(n)}}}},$$
where $\star \in \frac{\mathbb{Z}}{2}.$

When $j=1$ and $a_1 \ne 0,$ the proof can be deduced from (3) and the following exchange relation:
\begin{equation*}
x_1^{(n)}{x_1}={q^{{\ast}}} \prod\limits_{j \in [2,2n]}^ \triangleleft  {x_j^{{b_{j1}}}}  + 1,
\end{equation*}
where $\ast \in \frac{\mathbb{Z}}{2}.$
	
Now suppose that $j\leq i-1,$  we have $x_{j}^{(n)}\prod\limits_{k \in [1,n]}^ \triangleleft  {x_k^{{a_k}}} \cdot \prod\limits_{k \in [1,n]}^ \triangleleft  {{{(x_k^{(n)})}^{a_k^{(n)}}}} $ is  $\mathbb{ZP}$-linear combination of  projective standard  monomials.
When $j=i$,  we have
\begin{eqnarray*}
&&	x_i^{(n)}\prod\limits_{k \in [1,n]}^ \triangleleft  {x_k^{{a_k}}} \cdot \prod\limits_{k \in [1,n]}^ \triangleleft  {{{(x_k^{(n)})}^{a_k^{(n)}}}}
	= x_i^{(n)}\prod\limits_{k \in [{1},{i-1}]}^ \triangleleft  {x_k^{{a_k}}} \cdot x_i^{{a_i}} \cdot \prod\limits_{k \in[i+1,n]}^ \triangleleft  {x_k^{{a_k}}} \cdot \prod\limits_{k \in [1,n]}^ \triangleleft  {{{(x_k^{(n)})}^{a_k^{(n)}}}} \\
	&\mathop  = \limits^{(\ref{(jls')})}& ({q^{ -a_{{1}} {\lambda _{i{1}}}}}x_{{1}}^{{a_{{1}}}}x_i^{(n)} + g_{{1},{a_{{1}}}}^{(i)})\prod\limits_{k \in [{2},{i-1}]}^ \triangleleft  {x_k^{{a_k}}} \cdot x_i^{{a_i}}\cdot \prod\limits_{k \in[i+1,n]}^ \triangleleft  {x_k^{{a_k}}} \cdot \prod\limits_{k \in [1,n]}^ \triangleleft  {{{(x_k^{(n)})}^{a_k^{(n)}}}} \\
&	= &{q^{ - a_{{1}}{\lambda _{i{1}}}}}x_{{1}}^{{a_{{1}}}}x_i^{(n)}\prod\limits_{k \in [{2},{i-1}]}^ \triangleleft  {x_k^{{a_k}}} \cdot x_i^{{a_i}} \cdot \prod\limits_{k \in[i+1,n]}^ \triangleleft  {x_k^{{a_k}}} \cdot \prod\limits_{k \in [1,n]}^ \triangleleft  {{{(x_k^{(n)})}^{a_k^{(n)}}}} \\
&&	+ g_{{1},{a_{{1}}}}^{(i)}\prod\limits_{k \in [{2},{i-1}]}^ \triangleleft  {x_k^{{a_k}}} \cdot x_i^{{a_i}} \cdot \prod\limits_{k \in[i+1,n]}^ \triangleleft  {x_k^{{a_k}}} \cdot \prod\limits_{k \in [1,n]}^ \triangleleft  {{{(x_k^{(n)})}^{a_k^{(n)}}}} \\
	&\mathop  = \limits^{(\ref{(jls')})} &{q^{ - \sum\limits_{t = 1}^{i-1} {{a_{{t}}\lambda _{i{t}}}} }}\prod\limits_{k \in [{1},{i-1}]}^ \triangleleft  {x_k^{{a_k}}} \cdot x_i^{(n)}x_i^{{a_i}} \cdot \prod\limits_{k \in[i+1,n]}^ \triangleleft  {x_k^{{a_k}}} \cdot \prod\limits_{k \in [1,n]}^ \triangleleft  {{{(x_k^{(n)})}^{a_k^{(n)}}}}  + {F},
\end{eqnarray*}		
where $${F} = \sum\limits_{t = 1}^{i-1} {{q^{ - \sum\limits_{p = 1}^{t - 1} {{a_{{p}}\lambda _{i{p}}}} }}\cdot \prod\limits_{k \in [{1},{{t - 1}}]}^ \triangleleft  {x_k^{{a_k}}} \cdot  g_{{t},{a_{{t}}}}^{(i)}\cdot \prod\limits_{k \in [{{t + 1}},{i-1}]}^ \triangleleft  {x_k^{{a_k}}} \cdot x_i^{{a_i}}\cdot \prod\limits_{k \in[i+1,n]}^ \triangleleft  {x_k^{{a_k}}} \cdot \prod\limits_{k \in[1,n]}^ \triangleleft  {{{(x_k^{(n)})}^{a_k^{(n)}}}} }.$$

By Lemma \ref{prove}, $ g_{t,a_{{t}}}^{(i)} \in \mathbb{ZP}[x_1^{(n)}, \dots  ,x_{i - 1}^{(n)},x_{t}, \dots ,{x_n}].$ Thus, by induction  and (3), we can deduce that $F$ is  $\mathbb{ZP}$-linear combination of  projective standard  monomials.

We claim that ${q^{ - \sum\limits_{t = 1}^{i-1} {{\lambda _{i{t}}}} }}\prod\limits_{k \in [{1},{i-1}]}^ \triangleleft  {x_k^{{a_k}}} \cdot x_i^{(n)}x_i^{{a_i}}\cdot \prod\limits_{k \in[i+1,n]}^ \triangleleft  {x_k^{{a_k}}} \cdot\prod\limits_{k \in [1,n]}^ \triangleleft  {{{(x_k^{(n)})}^{a_k^{(n)}}}}$ is  $\mathbb{ZP}$-linear combination of  projective standard  monomials.

(i) If $a_i=0$, the claim is obvious;

(ii) If $a_i\neq 0$, we  compute

\begin{eqnarray*}
	&& {q^{ - \sum\limits_{t = 1}^{i-1} {{\lambda _{i{t}}}} }}\prod\limits_{k \in [{1},{i-1}]}^ \triangleleft  {x_k^{{a_k}}} \cdot x_i^{(n)}x_i^{{a_i}} \cdot \prod\limits_{k \in[i+1,n]}^ \triangleleft  {x_k^{{a_k}}} \cdot \prod\limits_{k \in [1,n]}^ \triangleleft  {{{(x_k^{(n)})}^{a_k^{(n)}}}}\\
&	 =  &{q^{ - \sum\limits_{t = 1}^{i - 1} {{\lambda _{it}}} }}\prod\limits_{k \in [1,i - 1]}^ \triangleleft  {x_k^{{a_k}}} \cdot \big( {{q^{{ \circ _{i,1}}}}\prod\limits_{k \in [1,i - 1]}^ \triangleleft  {{{(x_k^{(i - 1)})}^{ - {b_{ki}}}}} \cdot \prod\limits_{k \in [i + 1,2n]}^ \triangleleft  {x_k^{{b_{ki}}}}  + 1} \big)\\
	&&\cdot x_i^{{a_i} - 1} \prod\limits_{k \in [i+1,n]}^ \triangleleft  {x_k^{{a_k}}} \cdot \prod\limits_{k \in [1,n]}^ \triangleleft  {{{(x_k^{(n)})}^{a_k^{(n)}}}}\\
	&=& {q^{{ \circ _{i,1}} - \sum\limits_{t = 1}^{i - 1} {{\lambda _{it}}} }}\prod\limits_{k \in [1,i - 1]}^ \triangleleft  {x_k^{{a_k}}} \cdot \prod\limits_{k \in [1,i - 1]}^ \triangleleft  {{{(x_k^{(i - 1)})}^{ - {b_{ki}}}}} \cdot \prod\limits_{k \in [i + 1,2n]}^ \triangleleft  {x_k^{{b_{ki}}}} \\
\end{eqnarray*}

\begin{eqnarray*}
	&&\cdot x_i^{{a_i} - 1}\prod\limits_{k \in [i+1,n]}^ \triangleleft  {x_k^{{a_k}}} \cdot \prod\limits_{k \in [1,n]}^ \triangleleft  {{{(x_k^{(n)})}^{a_k^{(n)}}}} \\
&&	+ {q^{ - \sum\limits_{t = 1}^{i - 1} {{\lambda _{it}}} }}\prod\limits_{k \in [1,i - 1]}^ \triangleleft  {x_k^{{a_k}}} \cdot x_i^{{a_i} - 1} \cdot \prod\limits_{k \in [i+1,n]}^ \triangleleft  {x_k^{{a_k}}} \cdot \prod\limits_{k \in [1,n]}^ \triangleleft  {{{(x_k^{(n)})}^{a_k^{(n)}}}}
\end{eqnarray*}
where ${ \circ _{i,1}} \in \frac{\mathbb{Z}}{2}.$
Thus, the claim follows by induction and (3). The proof is completed.\end{proof}

For any $\mathbf{a} = \left( {{a_1},{a_2}, \ldots, {a_n}} \right) \in {{\mathbb{Z}}^n},$ we denote
$${\mathbf{x}^\mathbf{a}} := x_1^{{a_1}}x_2^{{a_2}} \ldots x_n^{{a_n}}.$$

Let $\prec$ denote the lexicographic order on $\mathbb{Z}^{n}$, i.e., for any two vectors $\mathbf{a} = \left( {{a_1},{a_2}, \ldots, {a_n}} \right)$, $\mathbf{a}' = \left( {{a'_1},{a'_2}, \ldots, {a'_n}} \right)$ $\in\ZZ^n$ satisfy that $\mathbf{a}\prec\mathbf{a}'$ if and only if  there exists $k \in [1,n]$ such that $a_k<a'_k$ and $a_i=a'_i$ for all $i \in [1,k-1]$.
This order induces the lexicographic order on the Laurent monomials as
\begin{gather*}
	\mathbf{x}^{\mathbf{a}}\prec\mathbf{x}^{\mathbf{a}'}
	\quad \text{ if } \quad
	\mathbf{a}\prec\mathbf{a}'.
\end{gather*}

\begin{definition}	
Let $Y = g_{\mathbf{a}}{\mathbf{x}^\mathbf{a}}+\sum\limits_y^{} g_{\mathbf{a}_y}{{\mathbf{x}^{\mathbf{a}_y}}}$ for nonzero elements $g_{\mathbf{a}}, g_{\mathbf{a}_y}\in \mathbb{ZP}$. We call $g_{\mathbf{a}}{\mathbf{x}^\mathbf{a}}$  the first Laurent monomial of $Y$  if $\mathbf{a}_y\prec\mathbf{a}$ for any $y$ in some index set.	
\end{definition}

 \begin{theorem}\label{t3.6}
Let  ${\Sigma}=( {\widetilde{\mathbf{x}},\Lambda,{\widetilde B}})$ be an acyclic quantum seed. Then the projective standard monomials in
${x_1},x_1^{(n)}, \dots ,{x_n},x_n^{(n)}$ form a $\mathbb{ZP}$-basis of $\mathcal{A}( {\Sigma})$.
\end{theorem}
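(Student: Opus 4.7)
The plan is to prove spanning and linear independence separately, both following fairly directly from the machinery built up in the preceding subsection.

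\textbf{Spanning.} By Theorem~\ref{thm1}, $\mathcal{A}(\Sigma) = \mathbb{ZP}[x_1, x_1^{(n)}, \ldots, x_n, x_n^{(n)}]$, so every element of $\mathcal{A}(\Sigma)$ is a $\mathbb{ZP}$-linear combination of ordered words in these $2n$ generators. The four parts of Proposition~\ref{span} allow one to push a single $x_j$ or $x_j^{(n)}$ past a partially formed projective standard monomial while staying in the $\mathbb{ZP}$-span of projective standard monomials, and the exchange relation between $x_k$ and $x_k^{(n)}$ used in that proposition reduces any word containing both $x_k$ and $x_k^{(n)}$ to a combination of strictly smaller total degree. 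A degree induction then expresses any word in the generators as a $\mathbb{ZP}$-linear combination of projective standard monomials.

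\textbf{Linear independence via first Laurent monomials.} The main technical step, verified by induction on $k$, is the following claim about the first Laurent monomial of $x_k^{(n)} = x_k^{(k)}$ in the expansion as a Laurent polynomial in $x_1, \ldots, x_n$ with coefficients in $\mathbb{ZP}$: its exponent vector $\mathbf{v}_k \in \mathbb{Z}^n$ satisfies $(\mathbf{v}_k)_k = -1$ and $(\mathbf{v}_k)_j = 0$ for all $j < k$. Writing $x_k^{(k)} = T_1 + x_k^{-1}$ via formula~\eqref{(xkk1)}, one compares the leading exponents of the two summands coordinate-by-coordinate. By the inductive hypothesis the cross-terms from $(\mathbf{v}_r)_j$ with $r > j$ vanish, so at positions $j < k$ the leading exponent of $T_1$ reduces to $b_{jk} \leq 0$ (provided all prior positions tied). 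Hence either some $b_{jk} < 0$ for $j < k$ and $x_k^{-1}$ wins the lex comparison (so $\mathbf{v}_k = -\mathbf{e}_k$), or $b_{jk} = 0$ for all $j < k$ and $T_1$ wins (so $\mathbf{v}_k = -\mathbf{e}_k + \sum_{r > k} b_{rk}\mathbf{e}_r$). Both forms respect the triangular claim, and so the $n \times n$ matrix $V$ with columns $\mathbf{v}_1, \ldots, \mathbf{v}_n$ is lower triangular with $-1$ on the diagonal.

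Because the first Laurent monomial of a product of Laurent polynomials with nonzero coefficients in the torsion-free ring $\mathbb{ZP}$ is obtained by adding exponents, the first Laurent monomial of a projective standard monomial $M = \prod x_k^{a_k} \prod (x_k^{(n)})^{a_k^{(n)}}$ has exponent vector $\mathbf{a} + V\mathbf{a}^{(n)}$. Processing coordinates $j = 1, \ldots, n$ in turn, the triangularity of $V$ allows one to solve for $a_j - a_j^{(n)}$ at each step from the already-known earlier coordinates; the projective standard constraint $a_j a_j^{(n)} = 0$ then pins down $a_j$ and $a_j^{(n)}$ individually. Hence distinct projective standard monomials have distinct first Laurent monomials, and any nontrivial $\mathbb{ZP}$-linear combination of projective standard monomials is nonzero.

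\textbf{Main obstacle.} The technical heart of the argument is the inductive triangular claim on $\mathbf{v}_k$, which relies crucially on the acyclicity-induced sign pattern $b_{jk} \leq 0$ for $j < k$ to ensure that the leading exponent of $T_1$ at positions smaller than $k$ is never strictly positive, so that it cannot beat the $x_k^{-1}$ summand at those positions. Once this structural fact is established, linear independence reduces to a short exercise in back-substitution.
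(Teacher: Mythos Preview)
Your proposal is correct and follows essentially the same strategy as the paper: spanning via Theorem~\ref{thm1} and Proposition~\ref{span}, linear independence via distinctness of first Laurent monomials. The paper packages the bookkeeping slightly differently---it parametrizes projective standard monomials by $\mathbf{a}\in\mathbb{Z}^n$ via $x_i^{\langle a_i\rangle}$ and asserts directly that $\mathbf{a}\prec\mathbf{a}'$ implies the first monomial of $\mathbf{x}^{\langle\mathbf{a}\rangle}$ precedes that of $\mathbf{x}^{\langle\mathbf{a}'\rangle}$---but this order-preservation claim rests on exactly the lower-triangular structure of the leading exponents $\mathbf{v}_k$ that you make explicit. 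Two small remarks: your ``degree induction'' phrasing for spanning is loose (the exchange relation need not decrease total degree), but Proposition~\ref{span} already gives spanning by a straightforward induction on word length; and in your case analysis for $\mathbf{v}_k$ you should also allow the possibility that $T_1$ and $x_k^{-1}$ have the same $\mathbb{Z}^n$-exponent (all $b_{rk}=0$ for $r\neq k$), in which case their $\mathbb{ZP}$-coefficients add without cancellation and the triangular claim still holds.
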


\begin{proof}
	According to  Theorem  \ref{thm1} and Proposition \ref{span}, we only need to prove that the projective
 standard monomials in ${x_1},x_1^{(n)}, \ldots ,{x_n},x_n^{(n)}$ are linearly independent over $\mathbb{ZP}$.
 	
We label projective standard monomials in
 ${x_1},x_1^{(n)}, \dots,{x_n},x_n^{(n)}$ by the points $\mathbf{a} = \left( {{a_1},{a_2}, \dots, {a_n}} \right) \in {{\mathbb{Z}}^n},$
  where ${\mathbf{x}^{\left\langle \mathbf{a} \right\rangle }} = x_1^{\left\langle {{a_1}} \right\rangle }x_2^{\left\langle {{a_2}} \right\rangle } \cdots x_n^{\left\langle {{a_n}} \right\rangle }$ and	
 	\[x_i^{\left\langle {{a_i}} \right\rangle } = \left\{ {\begin{array}{*{20}{c}}
 			{x_i^{{a_i}} ,   {\hspace{2.5cm}}if{\hspace{0.1cm}}{a_i} \ge 0};\\
 			{{{(x_i^{(n)})}^{ - {a_i}}},  {\hspace{1.5cm}}if{\hspace{0.1cm}}{a_i} < 0 }.
 \end{array}} \right.\]
	
Note that, projective standard monomials can be rewritten as $q^{{ \ast _{\mathbf{a}}}}{\mathbf{x}^{\left\langle \mathbf{a} \right\rangle }}$ for all $\mathbf{a} = \left( {{a_1},{a_2}, \dots, {a_n}} \right) \in {{\mathbb{Z}}^n}$, where
$\ast _{\mathbf{a}}\in  \frac{\mathbb{Z}}{2}$.
 For each ${a_i} < 0$, we have $$x_i^{\left\langle {{a_i}} \right\rangle }=(x_i^{(n)})^{ - {a_i}}=\big(x_{i}^{-1}({q^{{ \bullet _i}}} \prod\limits_{j \in [i + 1,2n]}^ \triangleleft  {x_j^{{b_{ji}}}}  \cdot \prod\limits_{j \in [1,i - 1]}^ \triangleleft  {{{(x_j^{(i - 1)})}^{ - {b_{ji}}}}}  + 1)\big)^{-{a_i}},$$
where
${\bullet _i}\in  \frac{\mathbb{Z}}{2}$.

 Thus, if $\mathbf{a}\prec\mathbf{a}'$, we can deduce that the first monomial in $\mathbf{x}^{\left\langle \mathbf{a} \right\rangle} $ precedes the first monomial in ${\mathbf{x}^{\left\langle \mathbf{a}' \right\rangle }}$. Hence, the proof is completed.
\end{proof}
\begin{remark}
\begin{enumerate}
\item If we set $q=1$ and $B$ skew-symmtric, one can obtain a $\mathbb{ZP}$-basis of the classical acyclic cluster algebra which is proved in \cite{BN}.
\item If $B$ is skew-symmtric, this basis should agree with an associated dual PBW basis in the language of \cite{KQ}. This is the reason why we call the basis in Theorem~\ref{t3.6} the dual PBW basis.
\end{enumerate}
\end{remark}

\section{An example}

Consider the acyclic quantum seed $({\widetilde{\mathbf{x}}},\Lambda,\widetilde{B})$ as follows:

\[\widetilde{B} = \left[ {\begin{array}{*{20}{c}}
		0&{ - 1}&{ - 1}\\
		1&0&{ - 2}\\
		1&2&0\\
		1&0&0\\
		0&1&0\\
		0&0&1
\end{array}}\,\, \right],\Lambda  = \left[ {\begin{array}{*{20}{c}}
		0&{ - 1}&{ - 1}&1&2&{ - 2}\\
		1&0&0&0&0&1\\
		1&0&0&0&1&0\\
		{ - 1}&0&0&0&{ - 1}&{ - 1}\\
		{ - 2}&0&{ - 1}&1&0&{ - 2}\\
		2&{ - 1}&0&1&2&0
\end{array}} \right].\]

We have
\begin{eqnarray*}
	{x'_1}&	=& {q^{ - \frac{1}{2}}}x_1^{ - 1}x_2^{}{x_3}{x_4} + x_1^{ - 1},\\
	{x'_2}&	=& {q^{ - 1}}x_2^{ - 1}x_3^2{x_5} + {q^{ - \frac{1}{2}}}x_1^{}x_2^{ - 1},\\
	{x'_3} &=&x_3^{ - 1}{x_6} + {q^{\frac{1}{2}}}x_1^{}x_2^2x_3^{ - 1}.
\end{eqnarray*}

By mutating the seed $({\widetilde{\mathbf{x}}},\Lambda,\widetilde{B})$ in the direction $1$, we have
\begin{eqnarray*}\label{(x1)}
	x_1^{(1)} &=& {x'_1} 	= {q^{ - \frac{1}{2}}}x_1^{ - 1}x_2^{}{x_3}{x_4} + x_1^{ - 1},
\end{eqnarray*}
\[{\widetilde{B}^{(1)}} = \left[ {\begin{array}{*{20}{c}}
		0&1&1\\
		{ - 1}&0&{ - 2}\\
		{ - 1}&2&0\\
		{ - 1}&0&0\\
		0&1&0\\
		0&0&1
\end{array}} \right], \,\,{\Lambda ^{(1)}} = \left[ {\begin{array}{*{20}{c}}
		0&1&1&{ - 1}&{ - 2}&2\\
		{ - 1}&0&0&0&0&1\\
		{ - 1}&0&0&0&1&0\\
		1&0&0&0&{ - 1}&{ - 1}\\
		2&0&{ - 1}&1&0&{ - 2}\\
		{ - 2}&{ - 1}&0&1&2&0
\end{array}} \right].\]

By mutating the seed $({\widetilde{\mathbf{x}}^{(1)}},{\Lambda ^{(1)}},{\widetilde{B}^{(1)}})$ in the direction $2$, we have
$$x_2^{(2)}= {q^{ - \frac{1}{2}}}x_1^{(1)}x_2^{ - 1}x_3^2{x_5} + x_2^{ - 1}={q^{ - 1}}x_1^{ - 1}x_3^3{x_4}{x_5} + {q^{ - \frac{1}{2}}}x_1^{ - 1}x_2^{ - 1}x_3^2{x_5} + x_2^{ - 1}.$$

Multiplying both sides of the above equation from left by $x_1$, we have
\begin{eqnarray*}
	x_1^{}x_2^{(2)} &=& {q^{ - 1}}x_3^3{x_4}{x_5} + {q^{ - \frac{1}{2}}}x_2^{ - 1}x_3^2{x_5} + x_1^{}x_2^{ - 1}\\
	&= &{q^{ - 1}}x_3^3{x_4}{x_5} + {q^{\frac{1}{2}}}({q^{ - 1}}x_2^{ - 1}x_3^2{x_5} + {q^{ - \frac{1}{2}}}x_1^{}x_2^{ - 1})\\
&	= &{q^{ - 1}}x_3^3{x_4}{x_5} + {q^{\frac{1}{2}}}{x'_2},
\end{eqnarray*}
\[{\widetilde{B}^{(2)}} = \left[ {\begin{array}{*{20}{c}}
		0&{ - 1}&1\\
		1&0&2\\
		{ - 1}&{ - 2}&0\\
		{ - 1}&0&0\\
		0&{ - 1}&0\\
		0&0&1
\end{array}} \right], \,\, {\Lambda ^{(2)}} = \left[ {\begin{array}{*{20}{c}}
		0&{ - 1}&1&{ - 1}&{ - 2}&2\\
		1&0&0&0&0&{ - 1}\\
		{ - 1}&0&0&0&1&0\\
		1&0&0&0&{ - 1}&{ - 1}\\
		2&0&{ - 1}&1&0&{ - 2}\\
		{ - 2}&1&0&1&2&0
\end{array}} \right].\]

By mutating the seed $({\widetilde{\mathbf{x}}^{(2)}},{\Lambda ^{(2)}},{\widetilde{B}^{(2)}})$ in the direction $3$, we have
\begin{eqnarray*}
			x_3^{(3)} &=& {q^{\frac{3}{2}}}x_1^{(2)}{(x_2^{(2)})^2}x_3^{ - 1}{x_6} + x_3^{ - 1}\\
			&=& {q^{\frac{3}{2}}}({q^{ - \frac{1}{2}}}x_1^{ - 1}x_2^{}{x_3}{x_4} + x_1^{ - 1}){({q^{ - \frac{1}{2}}}x_1^{(1)}x_2^{ - 1}x_3^2{x_5} + x_2^{ - 1})^2}x_3^{ - 1}{x_6} + x_3^{ - 1}\\
&=& {q^{ - 1}}x_1^{ - 1}{(x_1^{(1)})^2}x_2^{ - 1}x_3^4{x_4}x_5^2{x_6} + ({q^{\frac{1}{2}}} + {q^{\frac{3}{2}}})x_1^{ - 1}x_1^{(1)}x_2^{ - 1}x_3^2{x_4}{x_5}{x_6}\\
&&	+ {q^{}}x_1^{ - 1}x_2^{ - 1}{x_4}{x_6} + {q^{\frac{3}{2}}}x_1^{ - 1}{(x_1^{(1)})^2}x_2^{ - 2}x_3^3x_5^2{x_6}\\
	&&+ ({q^2} + {q^3})x_1^{ - 1}x_1^{(1)}x_2^{ - 2}x_3^{}{x_5}{x_6} + {q^{\frac{3}{2}}}x_1^{ - 1}x_2^{ - 2}x_3^{ - 1}{x_6} + x_3^{ - 1}.
\end{eqnarray*}

Multiplying both sides of the above equality from left by ${x_1}x_2^2$, we have
\begin{eqnarray*}
	{x_1}x_2^2x_3^{(3)} & =& {q^{ - 1}}{x_1}x_2^2x_1^{ - 1}{(x_1^{(1)})^2}x_2^{ - 1}x_3^4{x_4}x_5^2{x_6} + ({q^{\frac{1}{2}}} + {q^{\frac{3}{2}}}){x_1}x_2^2x_1^{ - 1}x_1^{(1)}x_2^{ - 1}x_3^2{x_4}{x_5}{x_6}\\
	&&+ {q^{}}{x_1}x_2^2x_1^{ - 1}x_2^{ - 1}{x_4}{x_6} + {q^{\frac{3}{2}}}{x_1}x_2^2x_1^{ - 1}{(x_1^{(1)})^2}x_2^{ - 2}x_3^3x_5^2{x_6}\\
	&&+ ({q^2} + {q^3}){x_1}x_2^2x_1^{ - 1}x_1^{(1)}x_2^{ - 2}x_3^{}{x_5}{x_6} + {q^{\frac{3}{2}}}{x_1}x_2^2x_1^{ - 1}x_2^{ - 2}x_3^{ - 1}{x_6} + {x_1}x_2^2x_3^{ - 1}\\
	&=& {q^{ - 7}}{(x_1^{(1)})^2}x_2^{}x_3^4{x_4}x_5^2{x_6} + ({q^{ - \frac{7}{2}}} + {q^{ - \frac{5}{2}}})x_1^{(1)}x_2^{}x_3^2{x_4}{x_5}{x_6}\\
	&&+ {q^{ - 1}}x_2^{}{x_4}{x_6} + {q^{ - \frac{9}{2}}}{(x_1^{(1)})^2}x_3^3x_5^2{x_6} + ({q^{ - 2}} + {q^{ - 1}})x_1^{(1)}x_3^{}{x_5}{x_6}\\
	&&+ {q^{ - \frac{1}{2}}}x_3^{ - 1}{x_6} + {x_1}x_2^2x_3^{ - 1}\\
&	= &{q^{ - 7}}{(x_1^{(1)})^2}x_2^{}x_3^4{x_4}x_5^2{x_6} + ({q^{ - \frac{7}{2}}} + {q^{ - \frac{5}{2}}})x_1^{(1)}x_2^{}x_3^2{x_4}{x_5}{x_6}\\
	&&+ {q^{ - 1}}x_2^{}{x_4}{x_6} + {q^{ - \frac{9}{2}}}{(x_1^{(1)})^2}x_3^3x_5^2{x_6} + ({q^{ - 2}} + {q^{ - 1}})x_1^{(1)}x_3^{}{x_5}{x_6}
	+ {q^{ - \frac{1}{2}}}{x'_3}.
\end{eqnarray*}

Thus
${x'_1},{x'_2}$ and ${x'_3} \in \mathbb{ZP}[{x_1},x_1^{(3)},{x_2},x_2^{(3)},{x_3},x_3^{(3)}].$
Note that
\begin{eqnarray*}\label{(e11)}
	& &x_1^{(3)}{x_1} - {x_1}x_1^{(3)}\nonumber\\
	& = &{q^{ - \frac{1}{2}}}x_1^{ - 1}x_2^{}{x_3}{x_4}{x_1} + x_1^{ - 1}{x_1} - {q^{ - \frac{1}{2}}}{x_1}x_1^{ - 1}x_2^{}{x_3}{x_4} - {x_1}x_1^{ - 1}\nonumber\\
	&	=& {q^{ - \frac{1}{2}}}(q - 1)x_2^{}{x_3}{x_4},\\
&&\\
	 & &x_2^{(3)}{x_2} - {x_2}x_2^{(3)} \nonumber\\
	 &=& {q^{ - \frac{1}{2}}}x_1^{(3)}x_2^{ - 1}x_3^2{x_5}{x_2} + x_2^{ - 1}{x_2}- {q^{ - \frac{1}{2}}}{x_2}x_1^{(3)}x_2^{ - 1}x_3^2{x_5} - {x_2}x_2^{ - 1}\nonumber\\
&	=& {q^{ - \frac{1}{2}}}(1 - {q^{ - 1}})x_1^{(3)}x_3^2{x_5},\\
&&\\
	& &x_3^{(3)}{x_3} - {x_3}x_3^{(3)} \nonumber\\
	&= &{q^{\frac{3}{2}}}x_1^{(3)}{(x_2^{(3)})^2}x_3^{ - 1}{x_6}{x_3} + x_3^{ - 1}{x_3}- {q^{\frac{3}{2}}}{x_3}x_1^{(3)}{(x_2^{(3)})^2}x_3^{ - 1}{x_6} - {x_3}x_3^{ - 1}\nonumber\\
	&=& {q^{\frac{1}{2}}}(q - 1)x_1^{(3)}{(x_2^{(3)})^2}{x_6},
\end{eqnarray*}
\begin{eqnarray*}\label{(e21)}
	& &x_2^{(3)}{x_1} - {q^{ - 1}}{x_1}x_2^{(3)} \nonumber\\
		&=& {q^{ - \frac{1}{2}}}x_1^{(3)}x_2^{ - 1}x_3^2{x_5}{x_1} + x_2^{ - 1}{x_1}- {q^{ - \frac{3}{2}}}{x_1}x_1^{(3)}x_2^{ - 1}x_3^2{x_5} - {q^{ - 1}}{x_1}x_2^{ - 1}\nonumber\\
	&=& {q^{ - \frac{3}{2}}}x_1^{(3)}{x_1}x_2^{ - 1}x_3^2{x_5} + {q^{ - 1}}{x_1}x_2^{ - 1} - {q^{ - \frac{3}{2}}}{x_1}x_1^{(3)}x_2^{ - 1}x_3^2{x_5} - {q^{ - 1}}{x_1}x_2^{ - 1}\nonumber\\
&	=& {q^{ - \frac{3}{2}}}(x_1^{(3)}{x_1} - {x_1}x_1^{(3)})x_2^{ - 1}x_3^2{x_5}\nonumber\\
	&=& {q^{ - 2}}(q - 1)x_3^3{x_4}{x_5},\\
&&\\
&&	x_3^{(3)}{x_2} - {x_2}x_3^{(3)}\nonumber\\
	&=& {q^{\frac{3}{2}}}x_1^{(3)}{(x_2^{(3)})^2}x_3^{ - 1}{x_6}{x_2} + x_3^{ - 1}{x_2} - {q^{\frac{3}{2}}}{x_2}x_1^{(3)}{(x_2^{(3)})^2}x_3^{ - 1}{x_6} - {x_2}x_3^{ - 1}\nonumber\\
&	=& {q^{\frac{1}{2}}}x_1^{(3)}{(x_2^{(3)})^2}{x_2}x_3^{ - 1}{x_6} - {q^{\frac{1}{2}}}x_1^{(3)}{x_2}{(x_2^{(3)})^2}x_3^{ - 1}{x_6}\nonumber\\
	&=& {q^{\frac{1}{2}}}x_1^{(3)}x_2^{(3)}\left( {{x_2}x_2^{(3)} + {q^{ - \frac{1}{2}}}(1 - {q^{ - 1}})x_1^{(3)}x_3^2{x_5}} \right)x_3^{ - 1}{x_6}\nonumber\\
&&	- {q^{\frac{1}{2}}}x_1^{(3)}\left( {x_2^{(3)}{x_2} - {q^{ - \frac{1}{2}}}(1 - {q^{ - 1}})x_1^{(3)}x_3^2{x_5}} \right)x_2^{(3)}x_3^{ - 1}{x_6}\\
&	=& (1 - {q^{ - 1}})x_1^{(3)}x_2^{(3)}x_1^{(3)}x_3^2{x_5}x_3^{ - 1}{x_6} + (1 - {q^{ - 1}}){(x_1^{(3)})^2}x_3^2{x_5}x_2^{(3)}x_3^{ - 1}{x_6}\\
	&= &({q^2} - 1){(x_1^{(3)})^2}x_2^{(3)}x_3^{}{x_5}{x_6},\\
&&\\
	&&x_3^{(3)}{x_1} - {q^{ - 1}}{x_1}x_3^{(3)}\\
	&= &{q^{\frac{3}{2}}}x_1^{(3)}{(x_2^{(3)})^2}x_3^{ - 1}{x_6}{x_1} + x_3^{ - 1}{x_1} - {q^{\frac{1}{2}}}{x_1}x_1^{(3)}{(x_2^{(3)})^2}x_3^{ - 1}{x_6} - {q^{ - 1}}{x_1}x_3^{ - 1}\\
	&=& {q^{\frac{5}{2}}}x_1^{(3)}{(x_2^{(3)})^2}{x_1}x_3^{ - 1}{x_6} + {q^{ - 1}}{x_1}x_3^{ - 1} - {q^{\frac{1}{2}}}{x_1}x_1^{(3)}{(x_2^{(3)})^2}x_3^{ - 1}{x_6} - {q^{ - 1}}{x_1}x_3^{ - 1}\\
	&=& {q^{\frac{5}{2}}}x_1^{(3)}x_2^{(3)}\left( {{q^{ - 1}}{x_1}x_2^{(3)} + {q^{ - 2}}(q - 1)x_3^3{x_4}{x_5}} \right)x_3^{ - 1}{x_6}\\
	&&- {q^{\frac{1}{2}}}\left( {x_1^{(3)}{x_1} - {q^{ - \frac{1}{2}}}(q - 1)x_2^{}{x_3}{x_4}} \right){(x_2^{(3)})^2}x_3^{ - 1}{x_6}\\
	&=& {q^{\frac{3}{2}}}x_1^{(3)}x_2^{(3)}{x_1}x_2^{(3)}x_3^{ - 1}{x_6} + {q^{\frac{1}{2}}}(q - 1)x_1^{(3)}x_2^{(3)}x_3^3{x_4}{x_5}x_3^{ - 1}{x_6}\\
	&&- {q^{\frac{1}{2}}}x_1^{(3)}{x_1}{(x_2^{(3)})^2}x_3^{ - 1}{x_6} + (q - 1)x_2^{}{x_3}{x_4}{(x_2^{(3)})^2}x_3^{ - 1}{x_6}\\
	&=& {q^{\frac{3}{2}}}x_1^{(3)}\left( {{q^{ - 1}}{x_1}x_2^{(3)} + {q^{ - 2}}(q - 1)x_3^3{x_4}{x_5}} \right)x_2^{(3)}x_3^{ - 1}{x_6}\\
	&&+ {q^{\frac{3}{2}}}(q - 1)x_1^{(3)}x_2^{(3)}x_3^2{x_4}{x_5}{x_6} - {q^{\frac{1}{2}}}x_1^{(3)}{x_1}{(x_2^{(3)})^2}x_3^{ - 1}{x_6}
	+ (q - 1)x_2^{}{(x_2^{(3)})^2}{x_4}{x_6}\\
&	=& {q^{ - \frac{1}{2}}}(q - 1)x_1^{(3)}x_3^3{x_4}{x_5}x_2^{(3)}x_3^{ - 1}{x_6} + {q^{\frac{3}{2}}}(q - 1)x_1^{(3)}x_2^{(3)}x_3^2{x_4}{x_5}{x_6}\\
	&&+ (q - 1)x_2^{}{(x_2^{(3)})^2}{x_4}{x_6}\\
	&=& (q - 1)x_2^{}\left( {{q^{ - \frac{1}{2}}}x_1^{(3)}x_2^{ - 1}x_3^2{x_5} + x_2^{ - 1}} \right)x_2^{(3)}{x_4}{x_6}
	+  {q^{\frac{1}{2}}}({q^2} - 1)x_1^{(3)}x_2^{(3)}x_3^2{x_4}{x_5}{x_6}\\
	&=& (q^{\frac{5}{2}}-q^{-\frac{1}{2}})x_1^{(3)}x_2^{(3)}x_3^2{x_4}{x_5}{x_6} + (q - 1)x_2^{(3)}{x_4}{x_6}
	.
\end{eqnarray*}

\section*{Acknowledgments}
Junyuan Huang was supported by Innovation Research for the Postgraduates of Guangzhou University (No. JCCX2024-053), Ming Ding was supported by NSF of China (No. 12371036) and Guangdong Basic and Applied Basic Research
Foundation (2023A1515011739) and Fan Xu was supported by NSF of China (No. 12031007).

\end{document}